\documentclass[11pt]{amsart}
\usepackage[a4paper,margin=28mm]{geometry}
\usepackage{amsmath,amssymb,mathtools,booktabs,microtype}
\usepackage[colorlinks=true,linkcolor=blue,citecolor=blue,urlcolor=blue]{hyperref}
\newtheorem{theorem}{Theorem}[section]
\newtheorem{proposition}[theorem]{Proposition}
\newtheorem{lemma}[theorem]{Lemma}

\theoremstyle{remark}
\newtheorem{remark}[theorem]{Remark}
\newcommand{\HH}{\mathbb H}
\newcommand{\RR}{\mathbb R}
\newcommand{\ZZ}{\mathbb Z}
\newcommand{\HP}{\mathbb H\mathrm P}
\newcommand{\secg}{\operatorname{sec}}

\newcommand{\II}{\mathrm{II}}
\newcommand{\sign}{\operatorname{sign}}

\newcommand{\eps}{\varepsilon}
\title[Positive curvature on exotic fifteen-spheres]{Positive sectional curvature on a family of exotic fifteen-spheres}
\author{Zhiqi Chen}
\author{Shaoqiang Deng}
\author{Hui Zhang}
\date{Version 2, October 23, 2026}
\makeatletter
\@ifundefined{subjclassname@2020}{%
  \@namedef{subjclassname@2020}{\textup{2020} Mathematics Subject Classification}%
}{}
\makeatother
\subjclass[2020]{Primary 53C20, 57R55; Secondary 55R25, 57R20, 57S25}
\keywords{Exotic sphere, positive sectional curvature, quaternionic Hopf
bundle, gauge group, Eells--Kuiper invariant}
\begin{document}
\begin{abstract}
We construct metrics of strictly positive sectional curvature on at least
508 pairwise nondiffeomorphic exotic fifteen-spheres, without prescribing
orientations. The construction begins with quaternionic line bundles over
$\HP^2\times S^4$. Surgery along a Hopf fiber gives homotopy spheres with
specified equivariant two-disk presentations. A calculation on the surgery
filling distinguishes 1015 nonstandard oriented smooth types. We construct
positively curved quotient metrics on both disks, match their boundary
data under the prescribed attaching map, and smooth the seam by averaging
the inverse tangential metrics. The connection estimates and the smoothing
argument are proved directly for every two-plane. Thus the curvature
conclusion does not require a compatible-disk theorem from the
eleven-dimensional construction or an external curvature-gluing theorem.
We give quantitative parameter bounds and specify the auxiliary choices
for the first nontrivial member after fixing its smooth gauge family.
\end{abstract}
\maketitle
\section{Introduction}

The standard sphere carries a round metric of constant positive sectional
curvature. This familiar example leads to a less elementary question when
one changes the smooth structure. An exotic sphere is a smooth manifold
homeomorphic, but not diffeomorphic, to a standard sphere. A homeomorphism
does not transport a smooth Riemannian metric, so the existence of a
round metric on the standard sphere gives no metric on an exotic one.
It is therefore natural to ask which exotic spheres admit metrics of
positive sectional curvature.

Milnor's construction of exotic seven-spheres~\cite{Milnor} showed that
a sphere can have more than one smooth structure. The subsequent work
of Gromoll and Meyer~\cite{GM} gave an exotic seven-sphere with a metric
of nonnegative sectional curvature. These examples connect two kinds of
questions: how a smooth structure is built, and how curvature behaves
under that construction. For strictly positive curvature, every tangent
two-plane must have positive curvature. A construction which controls
only some distinguished planes does not settle the question.

In this paper we construct a family of exotic fifteen-spheres from
quaternionic line bundles over $\HP^2\times S^4$, followed by surgery
along an $S^{11}$ fiber.
The surgery has a useful geometric interpretation: each resulting
manifold is the union of two standard disks with a specified boundary
diffeomorphism. This suggests constructing a metric on each disk and
joining the two metrics. Two difficulties remain. The boundary metrics
must agree under the very diffeomorphism which records the exotic
smooth structure; moreover, their agreement does not by itself make the
joined metric smooth or preserve positive curvature at the seam.

The two-cap approach in He--Liu--Yau and Deng--Hu--Zhang~\cite{HLY,DHZ}
provides the geometric strategy. Our aim is to carry out this approach
for the quaternionic representation and attaching maps arising from the
fifteen-dimensional surgery construction, with direct proofs of the
geometric estimates needed here. The topological construction, the connection estimate,
and the smoothing argument are presented together so that each
hypothesis can be checked on the same data. The resulting assertion is
as follows.

\begin{theorem}\label{thm:main}
There are at least $508$ pairwise nondiffeomorphic exotic smooth
fifteen-spheres, without prescribed orientation, admitting smooth
Riemannian metrics of strictly positive sectional curvature. More
precisely, the construction gives spheres $\Sigma_m$,
$1\le m\le1015$, with pairwise distinct nonstandard oriented smooth
structures and
\[
 \mu(\Sigma_m)=\frac{15m}{1016}\quad\text{in }\mathbb Q/\ZZ.
\]
Each of these spheres carries such a metric. The subset
$1\le m\le508$ remains pairwise nondiffeomorphic when orientations
are forgotten.
\end{theorem}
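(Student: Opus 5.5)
The proof splits into a topological half and a geometric half.

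\textbf{Topology.} First fix the bundles. Take quaternionic line bundles $E\to\HP^2\times S^4$ with $c_2(E)$ and $p_1(E)$ depending linearly on integer parameters. Let $S(E)$ be the unit sphere bundle, a principal-like $S^3$ bundle with total space of dimension $15$. Then do surgery along a Hopf fiber $S^{11}$. I would check that the result $\Sigma$ is a homotopy sphere by a Mayer--Vietoris and van Kampen computation. The parameter constraints needed for this are exactly the unimodularity conditions on the Euler and cross classes.

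The surgery trace is a $16$-dimensional filling $W$. Its intersection form is supported on $\HP^2\times S^4$-type classes, so $\sign(W)$ and $p_1^2$, $p_2$ evaluations are computed by Hirzebruch's formula. I would then apply the Eells--Kuiper formula
\[
\mu(\Sigma)\equiv\frac{1}{a_4}\Bigl(\text{combination of }p_1^4[W],\,p_1^2p_2[W],\dots\Bigr)-\frac{\sign(W)}{b}\pmod 1 .
\]
This is valid for spin fillings, or for fillings whose $H^4$ satisfies the required vanishing. Arranging that vanishing requires one more check on the parameters.

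Choosing the one-parameter subfamily indexed by $m$ should collapse the formula to $15m/1016$. Because $\mu$ is an oriented diffeomorphism invariant, distinct values for $1\le m\le1015$ give distinct nonstandard structures. Reversing orientation sends $\mu\mapsto-\mu$, and $15m/1016\equiv-15m'/1016$ forces $m+m'\equiv0\pmod{1016}$, since $\gcd(15,1016)=1$. So the representatives $1\le m\le508$ are pairwise distinct up to orientation; $m=508$ is paired only with itself. Nonstandardness follows because $\mu\neq0$ for these $m$, and $\mu$ vanishes on $S^{15}$.

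\textbf{Geometry.} Write $\Sigma_m=D_1\cup_\phi D_2$, where each $D_i$ is a disk bundle quotient carrying a $Sp(1)$-equivariant (or $Sp(1)\times Sp(1)$-equivariant) structure. The attaching map $\phi$ is given by a clutching function $S^{14}\to$ gauge group, coming from the quaternionic representation. On each disk:
\begin{itemize}
\item Put a Cheeger-deformed metric on $D_i\times S^3$ and descend to a quotient metric, so $\secg\ge0$ by O'Neill.
\item Then perturb it using a connection whose curvature, by direct estimates, gives strict positivity on every two-plane. Planes near horizontal are handled by the base curvature, vertical planes by the fiber, and mixed planes by the A-tensor bound.
\end{itemize}
Next verify that the boundary metrics, together with their second fundamental forms $\II$, agree under $\phi$. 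This holds because $\phi$ is an isometry for the chosen invariant round/Berger boundary data.

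Finally, on a collar $[-\eps,\eps]\times S^{14}$, interpolate the inverse tangential metrics $g_t^{-1}=(1-\chi)g_1^{-1}+\chi g_2^{-1}$. Expanding the Gauss equation and the radial curvature formula gives $\secg\ge c-C\eps$, which is positive for small parameters.

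\textbf{Main obstacle.} I expect the seam smoothing to be the hardest step. The matched boundary data must hold to second order, and the curvature estimate must be uniform over all two-planes, including mixed radial--tangential ones. The cross terms there involve $\chi'$, and they must be absorbed by the positivity of $\II$. I would try making both caps strictly convex near the boundary, so that $\chi'\cdot(\II_1-\II_2)$ has a favorable sign.
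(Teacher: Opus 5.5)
Your outline has the right architecture, but the topological half omits the step that needs a new idea. After the signature cancels $p_4$, the degree-sixteen Eells--Kuiper combination still contains $\langle p_1p_3\rangle$. The class $p_3$ of the filling $Z_k=D(\xi_k)\cup(\text{twelve-handle})$ is not determined by the bundle base. Indeed, $H^{12}(Z_k;\ZZ)$ is free on $x^3$ and $x^2y$, while $x^3=0$ on $B=\HP^2\times S^4$. So neither $TZ_k|_B$ nor the intersection form on $H^8$ sees the $x^3$-coefficient of $p_3$. (Those data are all you invoke, and they do suffice for the signature and for $p_1,p_2$.) The paper obtains the coefficient by capping the Hopf disk bundle over the surgery point with the handle core. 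This gives $F\cong\HP^3\subset Z_k$ with trivial normal bundle, so $TZ_k|_F\cong T\HP^3\oplus\RR^4$ and the coefficient is $-8$. Hence $p_3=-8x^3-18kx^2y$ and $\langle p_1p_3\rangle=56k$; dropping the handle term gives $72k$ and a wrong $\mu$. Second, ``choosing the subfamily indexed by $m$'' hides a realizability statement. For quaternionic line bundles $p_1=-2c_2$, so the only parameter is the clutching degree $k$ in $c_2=x+ky$, and the computation gives $\mu=k/12192$. You must show that $k=180m$ is realized by a smooth gauge family of the Hopf bundle $S^{11}\to\HP^2$. The paper does this with the section-space fibrations of the gauge group and $\pi_6(S^3)=\ZZ/12$, $\pi_{10}(S^3)=\ZZ/15$; only then does $k/12192$ become $15m/1016$.

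The geometric half contains a step that fails as stated. Let $p_1,p_2$ be smooth extensions of the two inverse tangential metrics. Then $p_2-p_1=tD+O(t^2)$ with $D=2(\II_1+\II_2)$, so $p=(1-\chi)p_1+\chi p_2$ has $p''=(t\chi)''D+O(1)$, where $(t\chi)''=2\chi'+t\chi''$ has size $1/\eps$. This term cannot be nonnegative: a convex function vanishing for $t\le-\eps$ and at $t=0$, and equal to $t$ for $t\ge\eps$, must be $\max(0,t)$. The $O(1/\eps)$ part of the radial curvature is $\frac12Hp''H$, so some radial planes get curvature $\le -c/\eps$. Convexity of the caps cannot repair a term coming from $\chi''$. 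The paper instead mollifies $p=h^{-1}$ in $t$. The kink then enters $p_\zeta''$ only through the nonnegative atom $\varrho_\zeta(t)D$, and the cutoff acts only where $p_\zeta-p=O(\zeta)$ in $C^2$.

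Relatedly, the gluing needs $C^0$ agreement plus $\II_N+J_\beta^*\II_S\ge0$, not agreement of second fundamental forms or second-order matching. In the paper the southern cap is concave in the fibre directions, and the northern fibre radius is chosen to cancel this exactly. The match also cannot come from $J_\beta$ being an isometry of a round boundary metric: then $J_\beta\in O(15)$ would extend over $D^{15}$ and $\Sigma_m$ would be standard.

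Finally, an A-tensor bound cannot make mixed planes positive on the source. With constant fibre radius, $\secg(X,U)=\frac{r^2}{4}\sum_iQ(\Omega(X,e_i),u)^2$. This vanishes for $X$ in the kernel of the two-form $Q(\Omega,u)$ on the $15$-dimensional horizontal space, and that kernel is never zero. The paper's key device is a strictly concave radius $r^2=\eps e^{\eps\phi}$, whose $-\operatorname{Hess}\log r$ term makes the mixed block positive. It is combined with a four-form correction that removes the order-one term $-\frac14Q(\Omega(X,Y),[u,v])$ without changing sectional curvature.
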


Here $\mu$ is the Eells--Kuiper invariant, whose normalization and
calculation are given in Section~\ref{sec:topological-invariant}.
Its role is to distinguish the smooth manifolds; it gives no curvature
information. Conversely, the metric construction applies to each of the
specified disk presentations and does not determine its smooth type.
The two parts of the proof meet at the exact boundary map.

We briefly explain the main steps. A family of gauge transformations
of the Hopf bundle $S^{11}\to\HP^2$ produces a smooth map
$\beta_m:S^{14}\to S^3$ satisfying
\[
 \beta_m(\rho(q)z)=q\beta_m(z)q^{-1},\qquad
 \rho(q)(a,w)=(a,wq^{-1})\quad\text{on }\RR^3\oplus\HH^3.
\]
The surgery sphere is exactly
$D^{15}\cup_{J_{\beta_m}}D^{15}$, where
$J_\beta(z)=\rho(\beta(z))z$. To distinguish these spheres, we use a
sixteen-dimensional filling. Capping a Hopf disk bundle inside that
filling gives a copy of $\HP^3$ with trivial normal bundle. This
submanifold detects the part of the third Pontryagin class which is
not visible on the original bundle base, and completes the invariant
calculation.

For the geometry, the southern disk is a free quotient of a connection
metric. Shrinking the fibers makes vertical curvature large, but the
mixed planes still require attention. A strictly concave variation of
the fiber radius and a four-form correction control the full curvature
expression on simple bivectors. The northern disk uses the radial profile
$F_\delta(s)=s(1+s^2/\delta^2)^{-1/4}$. Its boundary data are chosen to
match the southern disk, while the sum of the outward second fundamental
forms is positive. We then average the inverse tangential metrics in a
normal collar. The derivative jump produces nonnegative curvature terms;
the remaining errors are controlled uniformly on all two-planes.

The paper is organized in this order. Sections
\ref{sec:topological-construction} and \ref{sec:topological-invariant}
construct and distinguish the smooth spheres. Section~\ref{sec:connection}
proves the connection estimate, and Section~\ref{sec:smoothing} proves
the smoothing lemma before it is used. Section~\ref{sec:disks} constructs
the two disks and checks their common boundary. Section~\ref{sec:choices}
specifies the auxiliary functions and parameters for $m=1$.
The quantitative error tables are collected in Appendices
\ref{sec:connection-bounds} and \ref{sec:smoothing-bounds}.
Thus the proof has the following dependencies:
\[
\begin{array}{c}
 \text{gauge family}\longrightarrow
   \text{marked sphere and smooth invariant},\\[2pt]
 \text{connection estimate and radial profile}\longrightarrow
   \text{compatible disks},\\[2pt]
 \text{compatible disks and smoothing}\longrightarrow
   \text{positive curvature on the sphere}.
\end{array}
\]

\subsection*{Relation to earlier work and the first version}
The geometric strategy follows the two-cap constructions
of~\cite{HLY,DHZ}. In particular, the inverse-metric smoothing method
follows~\cite[Section 6.2, version 3]{DHZ}; we prove its needed metric-only
case in Section~\ref{sec:smoothing}. The contribution here is the
fifteen-dimensional family together with the topological and geometric
arguments for its specified attaching maps, rather than a claim that
the two-cap or inverse-metric methods are new.

The first version of this paper used the compatible-disk theorem
of~\cite{DHZ} as the geometric input. In this second version, we prove
the connection estimate, the two quotient-disk constructions, and the
seam-smoothing lemma directly. In particular, we do not invoke a
positive-curvature theorem for eleven-spheres, a compatible-disk theorem,
or the external gluing theorem of Reiser--Wraith~\cite{RW}.
We also make the parameter bounds and the topological interfaces explicit.

The classical inputs are the section-space fibration, the stated
homotopy groups of $S^3$, characteristic-class and index formulas,
and O'Neill's submersion formula. These are identified where they enter.
The construction proves existence for each fixed smooth gauge family;
it does not give a closed formula for a nontrivial family or certified
numerical values of all connection norms. Section~\ref{sec:choices}
makes that distinction explicit.

\subsection*{Conventions}
We use
\[
 R(X,Y)Z=\nabla_X\nabla_YZ-\nabla_Y\nabla_XZ-\nabla_{[X,Y]}Z,
 \qquad
 \secg(X,Y)=\frac{\langle R(X,Y)Y,X\rangle}{|X\wedge Y|^2}.
\]
The unit round sphere has sectional curvature $+1$. On
$\operatorname{Im}\HH$, the Euclidean inner product $Q$ gives
the unit $S^3$ its bi-invariant curvature-one metric; in an oriented
orthonormal basis $[e_a,e_b]=2\epsilon_{abc}e_c$.
For a boundary with outward unit normal $\nu$,
$\II(X,Y)=\langle\nabla_X\nu,Y\rangle$. Thus a Euclidean ball has
positive outward second fundamental form. Cohomology has integral
coefficients unless another coefficient group is written.
\section{Gauge families and the surgery spheres}\label{sec:topological-construction}

The metric construction will use two ordinary disks. Their nonstandard
smooth structure is therefore carried entirely by the map joining their
boundaries. We first construct that map from the Hopf bundle, then show
that it is exactly the attaching map of the surgery construction below. Keeping track of the actual map will also let us identify
the quotient boundary metrics later.

Write quaternionic projective space as the space of right lines and let
$\gamma_r\to\HP^r$ be its tautological quaternionic line bundle. Its unit
sphere bundle is the principal right $S^3$-bundle
\[
 P_r=S(\gamma_r)=S^{4r+3}\longrightarrow\HP^r,
 \qquad w\cdot q=wq.
\]
We use the complex rank-two orientation of each quaternionic line, so that
its Euler class is its second Chern class. Let $\mathcal G_r$ be the gauge
group of $P_r$, based at its identity. A gauge transformation satisfies
$f(wq)=f(w)q$ and covers the identity on $\HP^r$. Choosing a frame above
the unique zero-cell identifies $\mathcal G_0$ with $S^3$.

\begin{lemma}\label{lem:gauge-existence}
For every integer $m$, there is a jointly smooth based family
$f:S^3\to\mathcal G_2$ whose evaluation at the chosen frame has degree
$180m$. The family may be chosen to equal the identity on a neighborhood
of the parameter basepoint.
\end{lemma}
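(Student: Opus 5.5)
The plan is to view $\mathcal G_2$ as the section space of the adjoint bundle $\operatorname{Ad}P_2=P_2\times_{S^3}S^3$, with $S^3$ acting on itself by conjugation. A based family $S^3\to\mathcal G_2$ is then a section of the pulled-back bundle over $S^3\times\HP^2$ that equals the identity section on $\{\ast\}\times\HP^2$. Evaluation at the chosen frame is restriction of sections to the $0$-cell $\HP^0\subset\HP^2$. The lemma then says the degree $180m$ lies in the image of
\[
 \mathrm{ev}_*:\pi_3(\mathcal G_2)\to\pi_3(\mathcal G_0)=\pi_3(S^3)\cong\ZZ,
\]
together with a smoothing statement. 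I will prove the homotopical statement first, by extending a family cell by cell along $\HP^0\subset\HP^1\subset\HP^2$, and then smooth.

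\textbf{Step 1: the two restriction fibrations.} Restriction of sections gives fibrations $\Gamma(\HP^r)\to\Gamma(\HP^{r-1})$ for $r=1,2$. Here $\HP^r$ is obtained from $\HP^{r-1}$ by attaching a $4r$-cell, and $\operatorname{Ad}P_r$ is trivial over that cell. So the fiber consists of sections on $D^{4r}$ that are fixed on the boundary, which is $\Omega^{4r}S^3$. Its homotopy groups are $\pi_k(\Omega^{4r}S^3)=\pi_{k+4r}(S^3)$.

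The long exact sequence in $\pi_3$ reads
\[
 \pi_3\Gamma(\HP^r)\to\pi_3\Gamma(\HP^{r-1})\xrightarrow{\partial}\pi_{4r+2}(S^3).
\]
It shows that the cokernel of the restriction map on $\pi_3$ injects into $\pi_{4r+2}(S^3)$. The relevant groups are $\pi_6(S^3)\cong\ZZ/12$ for $r=1$ and $\pi_{10}(S^3)\cong\ZZ/15$ for $r=2$; these are among the stated classical homotopy groups of $S^3$. The connecting map $\partial$ is a homomorphism, since $\pi_3$ is a group.

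\textbf{Step 2: lifting the degree.} Start with the class $\iota\in\pi_3(\mathcal G_0)\cong\ZZ$.
\begin{itemize}
 \item The element $12\iota$ is killed by $\partial$, because the target $\pi_6(S^3)\cong\ZZ/12$ has exponent $12$. Hence $12\iota$ lifts to some $g_1\in\pi_3\Gamma(\HP^1)$.
 \item The element $15g_1$ is killed by the second connecting map, because $\pi_{10}(S^3)\cong\ZZ/15$ has exponent $15$. Hence $15g_1$ lifts to $\pi_3\Gamma(\HP^2)=\pi_3(\mathcal G_2)$.
\end{itemize}
The resulting class evaluates to $180\iota$. Taking its $m$-th multiple gives a based continuous family of degree $180m$.

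\textbf{Step 3: smoothing.} The family corresponds to a continuous section $\sigma$ of a smooth fiber bundle over $S^3\times\HP^2$. After a homotopy, I make $\sigma$ equal to the identity section on $U\times\HP^2$, where $U$ is a neighborhood of the basepoint. To do this I reparametrize $S^3$ by a map that collapses $U$ to the basepoint and is homotopic to the identity rel basepoint.

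Next I approximate $\sigma$ by a smooth section $C^0$-close to it, relative to a slightly smaller closed neighborhood where $\sigma$ is already smooth. Close sections are homotopic through sections, for instance by fiberwise geodesic interpolation in the bi-invariant metric. This homotopy is also relative to the neighborhood, so the basepoint condition and the degree are preserved. Equivariance $f(wq)=f(w)q$ is automatic, because these are sections of $\operatorname{Ad}P_2$. A smooth section of this bundle is the same thing as a jointly smooth family of gauge transformations.

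\textbf{Main obstacle.} The main point is the identification of the fibers with $\Omega^{4r}S^3$ and of the connecting maps with homomorphisms into $\pi_{4r+2}(S^3)$. Once that is in place, the bound $12\cdot15$ is immediate. No exact computation of the image is needed; only divisibility is used, which is why the degree is stated as $180m$ rather than as a generator of the image. The smoothing in Step 3 is routine.
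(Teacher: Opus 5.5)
Your proposal is correct and follows essentially the same route as the paper. Both arguments use the restriction fibrations along $\HP^0\subset\HP^1\subset\HP^2$ with fibers $\Omega^{4r}S^3$, lift $12\iota$ and then $15$ times that lift using $\pi_6(S^3)=\ZZ/12$ and $\pi_{10}(S^3)=\ZZ/15$, and finish with relative smooth approximation together with a degree-one collapse near the basepoint. The only difference is cosmetic: you collapse first and then approximate relative to the identity neighborhood, using geodesic interpolation for the homotopy, whereas the paper approximates first and then precomposes with a smooth collapse map.
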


\begin{proof}
Gauge transformations are sections of the adjoint bundle with fiber $S^3$
and conjugation structure group. Write the cellular decomposition
$\HP^r=\HP^{r-1}\cup_\alpha D^{4r}$ and let
$\mathcal A_r=P_r\times_{\mathrm{Ad}}S^3$.
Trivialize the pullback of $\mathcal A_r$ to the characteristic disk.
Restriction to its boundary identifies the continuous section space with
the pullback
\[
 \Gamma(\mathcal A_r)=
 \Gamma(\mathcal A_{r-1})
 \mathop{\times}_{\operatorname{Map}(S^{4r-1},S^3)}
 \operatorname{Map}(D^{4r},S^3).
\]
Here the first map uses the attaching map and the chosen trivialization.
The restriction map
$\operatorname{Map}(D^{4r},S^3)\to
\operatorname{Map}(S^{4r-1},S^3)$ is a fibration because
$S^{4r-1}\hookrightarrow D^{4r}$ is a cofibration.
Its pullback is therefore a fibration on the components under consideration.
The identity section has constant identity boundary value in every
adjoint trivialization; its fiber is exactly
$\operatorname{Map}((D^{4r},S^{4r-1}),(S^3,1))=\Omega^{4r}S^3$.
Equivalently, this is the top row of
\cite[p.~31, diagram~(5.4)]{Gottlieb}, where the double-star spaces
fix the induced base map, as defined on p.~26.
The homotopy exact sequences contain
\[
 \pi_3\mathcal G_1\longrightarrow\pi_3(S^3)
       \longrightarrow\pi_6(S^3),\qquad
 \pi_3\mathcal G_2\longrightarrow\pi_3\mathcal G_1
       \longrightarrow\pi_{10}(S^3).
\]
The classical computations
$\pi_6(S^3)=\ZZ/12$ and $\pi_{10}(S^3)=\ZZ/15$
are recalled in~\cite[Section~1.a and Example~3.6]{CG}.
If $\iota$ generates $\pi_3(S^3)$, exactness gives a class
$a\in\pi_3\mathcal G_1$ evaluating to $12\iota$.
The image of $15a$ in $\pi_{10}(S^3)$ is zero, so $15a$ lifts to
$b\in\pi_3\mathcal G_2$ with evaluation $180\iota$.
A representative of $mb$ gives the required evaluation degree.
Smooth approximation for bundle sections, relative to the basepoint,
gives a jointly smooth representative. Precomposition with a smooth
degree-one map of $S^3$ constant near its basepoint makes the family
the identity there.
For this last approximation one may view $\mathcal A_2$ as the unit
sphere bundle of $\RR\oplus\operatorname{ad}P_2$, approximate the
parameterized section in that vector bundle relative to its identity
neighborhood, and normalize it fiberwise. The approximation preserves
the evaluation degree.
\end{proof}

Put $B=\HP^2\times S^4$. Let $x$ denote the pullback of
$c_2(\gamma_2)$. Orient the parameter sphere so that positive clutching
degree gives positive second Chern number, and let $y$ be its positive
degree-four generator. Thus
\begin{equation}\label{eq:B-cohomology}
 H^*(B;\ZZ)=\ZZ[x,y]/(x^3,y^2),
 \qquad \langle x^2y,[B]\rangle=1.
\end{equation}
For $k=180m$, clutch two copies of $P_2\times D^4$ along their
parameter boundary using a family from Lemma~\ref{lem:gauge-existence}.
Our transition convention is $w_N=f_p(w_S)$.
The resulting principal bundle has an associated quaternionic line bundle
$\xi_k\to B$. Its restriction to $\HP^2\times\{s\}$ is
$\gamma_2$, whereas on $\{b_0\}\times S^4$ it has clutching degree
$k$. Since these restrictions determine degree-four cohomology,
\begin{equation}\label{eq:bundle-euler}
 e(\xi_k)=c_2(\xi_k)=x+ky.
\end{equation}
\begin{remark}[An independent index check of the evaluation degree]
In fact, $180$ is the least positive degree occurring in this construction.
For any quaternionic line bundle on $B=\HP^2\times S^4$ with
$c_2=x+ky$, the spin Dirac operator twisted by that line bundle has index
\[
 \operatorname{ind}D_{\xi_k}
 =\bigl\langle\widehat A(TB)\operatorname{ch}(\xi_k),[B]\bigr\rangle
 =\frac{k}{180}\in\ZZ.
\]
Indeed, $B$ is spin, $\widehat A(TB)=1+x/12$, and, through degree twelve,
\[
 \operatorname{ch}(\xi_k)
 =2-(x+ky)+\frac{(x+ky)^2}{12}-\frac{(x+ky)^3}{360}.
\]
The coefficient of $x^2y$ in their product is
$-k/120+k/72=k/180$. The twisted index theorem~\cite[p.~571, Theorem~(5.3) and Remark~2]{AS}
therefore forces $180\mid k$. Together with
Lemma~\ref{lem:gauge-existence}, this identifies the image of gauge
evaluation with $180\ZZ$. This independent obstruction does not change
the smooth-type count below.
\end{remark}

The sphere bundle $E_k=S(\xi_k)$ is an $S^{11}$-bundle over $S^4$.

Fix the southern clutching chart. At its center, remove the interior of
the product neighborhood $S^{11}\times D^4$ and replace it with
$D^{12}\times S^3$. All parameterizations and normal framings refer to
this fixed product chart. After rounding corners, the result is
\begin{equation}\label{eq:surgery-model}
 \widehat\Sigma_k=(S^{11}\times D^4)\cup_{\Phi_f}(D^{12}\times S^3),
 \qquad \Phi_f(w,p)=(f_p(w),p).
\end{equation}
Equivalently, attach the corresponding index-twelve handle to
$W_k=D(\xi_k)$, and denote the resulting oriented sixteen-manifold by
\begin{equation}\label{eq:filling}
 Z_k=W_k\cup(D^{12}\times D^4),\qquad
 \widehat\Sigma_k=\partial Z_k.
\end{equation}
Here $W_k$ has the product of the orientation in
\eqref{eq:B-cohomology} and the complex fiber orientation.

\begin{lemma}\label{lem:homotopy-sphere}
The manifold $\widehat\Sigma_k$ is a smooth homotopy fifteen-sphere.
\end{lemma}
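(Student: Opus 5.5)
Since $\widehat\Sigma_k$ is a closed smooth $15$-manifold, by the Hurewicz and Whitehead theorems it suffices to show it is simply connected with $\widetilde H_*(\widehat\Sigma_k;\ZZ)=0$. I would carry out the homology computation on the filling $Z_k$ of \eqref{eq:filling} and pass to the boundary by Poincar\'e--Lefschetz duality, rather than working directly with the gluing \eqref{eq:surgery-model}.

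\textbf{Step 1: $Z_k$ is a disk-like homology object.} The disk bundle $W_k=D(\xi_k)$ deformation retracts onto $B$. Its homology is therefore $\ZZ$ in degrees $0,4,8,12$, with ranks $1,2,2,1$ by \eqref{eq:B-cohomology}, and it vanishes in odd degrees. The twelve-handle is attached along $S^{11}\times\{0\}$, a Hopf fiber $S^{11}=P_2$ of $E_k$ over the southern center of $S^4$. Up to homotopy, $Z_k$ is $B$ with a $12$-cell attached along the composite $S^{11}\to E_k\to W_k\simeq B$, namely the inclusion of $P_2\to\HP^2\times\{s\}$. This is the Hopf map, whose mapping cone is $\HP^3$. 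Hence
\[
 Z_k\simeq \HP^3\cup_{\HP^2}(\HP^2\times S^4),
\]
and its homology gains one class in degree $12$ (the $\HP^3$ of the introduction). In any case $Z_k$ is simply connected, with free homology concentrated in degrees $0,4,8,12$, of ranks $1,2,2,2$.

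\textbf{Step 2: the intersection form is unimodular.} The key point is that the inclusion $\partial Z_k\to Z_k$ induces, via the exact sequence of the pair, isomorphisms exactly when the intersection pairing
\[
 H_j(Z_k)\times H_{16-j}(Z_k)\to\ZZ
\]
is unimodular for all $j$. Since $H_*(Z_k,\partial Z_k)\cong H^{16-*}(Z_k)$, the boundary has vanishing reduced homology iff $H_j(Z_k)\to H_j(Z_k,\partial)\cong H^{16-j}(Z_k)=\operatorname{Hom}(H_{16-j},\ZZ)$ is an isomorphism for every $j$.

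The relevant pairings are $H_4\times H_{12}$ and $H_8\times H_8$. For $H_8$, take the classes $[\HP^2]\times\mathrm{pt}$ (the zero section restricted) and $[\HP^1\times S^4]$. Their self-intersections in $W_k$ are computed by the Euler class $e=x+ky$ of \eqref{eq:bundle-euler}. For classes $a,b\in H_8(B)$, one gets $a\cdot b=\langle \mathrm{PD}(a)\,\mathrm{PD}(b)\,e,[B]\rangle$.
\begin{itemize}
\item[] In the basis dual to $x^2$ and $xy$, the Gram matrix is $\begin{pmatrix}0&1\\1&k\end{pmatrix}$, using $x^3=0$ and $y^2=0$.
\end{itemize}
This matrix has determinant $-1$. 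The pairing $H_4\times H_{12}$ is handled similarly: $H_{12}$ is spanned by $[B]$ and the new $\HP^3$ class, with $H_4$ spanned by $\HP^1$ and $S^4$. It is triangular with unit diagonal after one checks that the $\HP^3$ meets the $S^4$-direction class trivially and meets $\HP^1$ once.

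\textbf{Main obstacle and fallback.} I expect the main obstacle to be this last check, which requires pinning down orientations and the framing of the capped $\HP^3$. If it is awkward, the fallback is to compute directly from \eqref{eq:surgery-model} with Mayer--Vietoris. Write $\widehat\Sigma_k=U\cup V$ with $U\simeq S^{11}$, $V\simeq S^3$ and $U\cap V\simeq S^{11}\times S^3$, where the gluing map $\Phi_f$ induces on $H_*(S^{11}\times S^3)$ the identity up to a possible extra term in degree $14$, which is irrelevant. Then Mayer--Vietoris gives $\widetilde H_*=0$ outright.

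Finally, van Kampen applies with simply connected pieces and connected intersection, giving $\pi_1=0$. This Mayer--Vietoris route is probably cleaner and is what I would actually write, keeping the $Z_k$ computation for the invariant later.
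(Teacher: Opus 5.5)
The Mayer--Vietoris argument you say you would actually write is the paper's proof, and it is correct. The pieces $S^{11}\times D^4$ and $D^{12}\times S^3$ and their intersection $S^{11}\times S^3$ are simply connected. The gluing $\Phi_f$ covers the identity of $S^3$, so it is the identity on $H_3$. It restricts on each fibre to the diffeomorphism $f_p$, so it is $\pm1$ on $H_{11}$. Mayer--Vietoris then gives the homology of $S^{15}$, and van Kampen, Hurewicz and Whitehead finish the proof. Nothing more than this is needed.

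Your primary route through $Z_k$ would also work, but the check you flagged as the main obstacle is stated backwards. Inside $W_k$ the capped fibre $F$ lies over $\HP^2\times\{s_0\}$, and otherwise it is the handle core. It is therefore disjoint from $\HP^1\times\{s_1\}$ for $s_1\neq s_0$. It meets $\{b\}\times S^4$ transversally in the single zero-section point $(b,s_0)$, since $T\HP^2\oplus\xi_k$ and $TS^4$ span $TW_k$ there. Take rows $[\HP^1\times\mathrm{pt}],[\mathrm{pt}\times S^4]$ and columns $[B],[F]$; the row $[B]$ entries come from $\langle x+ky,\cdot\rangle$. The $H_4\times H_{12}$ pairing is then $\begin{pmatrix}1&0\\k&\pm1\end{pmatrix}$, which is unimodular. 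With your assignment, in which $F$ meets $\HP^1$ once and $S^4$ not at all, the matrix would be $\begin{pmatrix}1&1\\k&0\end{pmatrix}$. Its determinant is $-k=-180m$, so the check as written would fail rather than succeed. Your $H_8$ Gram matrix $\begin{pmatrix}0&1\\1&k\end{pmatrix}$ is correct; up to ordering it is the intersection matrix the paper uses later for the signature.
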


\begin{proof}
Both pieces in~\eqref{eq:surgery-model} and their intersection
$S^{11}\times S^3$ are simply connected. Each $f_p$ preserves
orientation, because the based continuous family lies in the identity
component of the gauge group. The maps from the intersection to the
pieces on homology in degrees $3$ and $11$ are isomorphisms.
The Mayer--Vietoris sequence consequently gives the integral homology
of $S^{15}$, while van Kampen gives simple connectivity.
The Hurewicz theorem, successively in the first possible nonzero
homotopy degree, and then Whitehead's theorem show that this manifold
is homotopy equivalent to $S^{15}$.
\end{proof}

We next give the disk presentation without using a disk-cutting lemma
from the eleven-dimensional paper. Regard the parameter sphere as
$D^3/\partial D^3$, with $f_p$ the identity near $\partial D^3$.
There is a unique smooth principal division function
\begin{equation}\label{eq:division}
 b(p,w)=\sum_{j=1}^3\overline{w_j}f_{p,j}(w)\in S^3,
 \qquad f_p(w)=wb(p,w).
\end{equation}
Right equivariance implies
\begin{equation}\label{eq:division-equivariance}
 b(p,wq)=q^{-1}b(p,w)q.
\end{equation}
Use the left orthogonal representation
\begin{equation}\label{eq:specific-representation}
 V=\RR^3\oplus\HH^3,
 \qquad \rho(q)(a,w)=(a,wq^{-1}).
\end{equation}
Fix $0<r_0<1$ and define on $S(V)=S^{14}$
\begin{equation}\label{eq:explicit-beta}
 \widehat\beta_k(a,w)=
 \begin{cases}
  b(a/r_0,w/|w|)^{-1},& |a|\leq r_0,\\
  1,&|a|\geq r_0.
 \end{cases}
\end{equation}
On the first region $w\ne0$; the two definitions agree on a collar of
their interface; and the second is constant near $w=0$. Thus the map
is smooth, and~\eqref{eq:division-equivariance} gives
\begin{equation}\label{eq:beta-equivariance}
 \widehat\beta_k(\rho(q)z)=q\widehat\beta_k(z)q^{-1}.
\end{equation}

\begin{proposition}\label{prop:disk-presentation}
Set $J_\beta(z)=\rho(\beta(z))z$. The map
$J_{\widehat\beta_k}$ is an orientation-preserving diffeomorphism and
\begin{equation}\label{eq:disk-presentation}
 \widehat\Sigma_k\cong D^{15}_+\cup_{J_{\widehat\beta_k}}D^{15}_-.
\end{equation}
The disk orientations in this expression are the ones induced from
the oriented surgery manifold, rather than two identical standard
boundary orientations.
\end{proposition}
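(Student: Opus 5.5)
The proof has two parts. First, $J_{\widehat\beta_k}$ is a diffeomorphism of $S^{14}$. Second, the surgery manifold \eqref{eq:surgery-model} is cut into two $15$-disks whose common boundary is identified by exactly this map.

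\emph{Diffeomorphism.} Write $J=J_{\widehat\beta_k}$. Since $\rho$ is orthogonal, $J$ maps $S(V)$ to itself. By \eqref{eq:beta-equivariance},
\[
\widehat\beta_k(\rho(q)z)=q\,\widehat\beta_k(z)\,q^{-1}.
\]
With $q=\widehat\beta_k(z)$ this gives $\widehat\beta_k(J(z))=\widehat\beta_k(z)$. Hence the map $z\mapsto\rho(\widehat\beta_k(z)^{-1})z$ is a smooth two-sided inverse. Concretely, on $|a|\le r_0$ we have
\[
J(a,w)=\bigl(a,\,w\,b(a/r_0,w/|w|)\bigr)=\bigl(a,\,|w|\,f_{a/r_0}(w/|w|)\bigr),
\]
by \eqref{eq:division}, while $J$ is the identity on $|a|\ge r_0$. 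Orientation preservation follows because $J$ is isotopic to the identity: scale the parameter, that is, replace $f_p$ by the path $f_{tp}$ inside the based family, or equivalently note that $J$ covers $a\mapsto a$ by fiberwise orientation-preserving maps.

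\emph{Disk decomposition.} Decompose $S^{15}\subset\RR^4\oplus\HH^3=\RR\oplus V$ as the two hemispheres $D_\pm$ and compare with the standard model
\[
S^{15}=(S^{11}\times D^4)\cup_{\mathrm{id}}(D^{12}\times S^3).
\]
Write $D^4=D^3\times[-1,1]$ after rounding corners, and think of $S^3=D^3/\partial D^3$. Then $S^{11}\times D^4$ splits along the last coordinate into $S^{11}\times D^3\times[-1,0]$ and $S^{11}\times D^3\times[0,1]$. The twist $\Phi_f$ is supported over the parameter region $|p|<1$ of the gluing sphere $S^3=\partial D^4$.

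The plan is to absorb $\Phi_f$ into a single equator map. Choose a smooth splitting in which the equator $S^{14}=S(V)$ contains the region $\{|a|\le r_0\}\cong D^3\times S^{11}\times$(radial $|w|$-direction), where the $a/r_0$ variable plays the role of $p$. Here $D^{15}_-$ is the union of the southern half of $S^{11}\times D^4$ with the half of $D^{12}\times S^3$ lying over $|p|\ge$ the cutoff. On that half the gluing is the identity, because $f_p=\mathrm{id}$ near $\partial D^3$. The other pieces form $D^{15}_+$.

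Over the hemisphere where $\Phi_f$ acts, one can reparametrize the $D^{12}\times S^3$ side by $\Phi_f^{-1}$. Since $f_p(w)=w\,b(p,w)$, and the cone coordinate on $D^{12}$ is $|w|\cdot w/|w|$, this reparametrization extends over $D^{12}$ away from $w=0$. The reason is that the $\widehat\beta_k$-twist is constant, equal to $1$, near $w=0$, that is, near $|a|\ge r_0$ on the equator. The net effect is that the only remaining mismatch is on the equator, where it reads $(a,w)\mapsto(a,w\,b(a/r_0,w/|w|))=J(a,w)$. This matches the radial-extension form $|w|f_p(w/|w|)$ computed above.

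The main obstacle is this bookkeeping. One must choose the equatorial collar so that the $|w|$-coordinate of $V$ corresponds to the radial coordinate of $D^{12}$ and the $a$-coordinate to the $D^3$-chart of the parameter sphere. One must also check that the region $|a|\ge r_0$, where $J=\mathrm{id}$, covers the locus $w=0$, where the radial extension would fail. I would handle this with explicit round-corner coordinates, building $S^{15}=S(\RR\oplus\RR^3\oplus\HH^3)$ as the join $S^3*S^{11}$ and matching join coordinates with the handle decomposition.

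\emph{Orientations.} Finally, orient $D^{15}_\pm$ as codimension-zero submanifolds of $\widehat\Sigma_k$, with the orientation induced from $Z_k$ in \eqref{eq:filling}. Their boundary orientations are then opposite on the common equator. This is the sense of the last sentence of the proposition.
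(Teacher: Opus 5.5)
Your inverse argument is fine: $\widehat\beta_k\circ J=\widehat\beta_k$ gives the inverse $J_{\widehat\beta_k^{-1}}$. Your formula $J(a,w)=(a,|w|f_{a/r_0}(w/|w|))$ on $|a|\le r_0$ is also correct. The disk decomposition, however, has a genuine gap.

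You propose to reparametrize the handle $D^{12}\times S^3$ over the support of the family by $\Phi_f^{-1}$, extended over $D^{12}$ by the cone formula $|w|\,f_p(w/|w|)$. That extension is not smooth at the core points $\{0\}\times\{p:f_p\neq\mathrm{id}\}$. Those points lie in the interior of one of the two disks, not on the equator. Your justification is that $w=0$ lies in the region $|a|\ge r_0$ where $J=\mathrm{id}$. But that concerns the locus $w=0$ of the equator $S(V)$, which sits in the piece $D^{12}\times S^2$ over the parameter basepoint, where $f$ is already trivial. It says nothing about the handle core over $|p|<1$.

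No better extension can repair this. A smooth extension of $\Phi_f$ over $D^{12}\times S^3$ would give a diffeomorphism $\widehat\Sigma_k\cong(S^{11}\times D^4)\cup_{\mathrm{id}}(D^{12}\times S^3)=S^{15}$. For the exotic members this contradicts $\mu(\Sigma_m)\neq0$ in Theorem~\ref{thm:independent-topology}. Even granting the homeomorphism, reparametrizing the handle side cancels the twist rather than transporting it. An equator cut through the middle of the cylinder would then get the identity as its gluing map, not $J$. Also, $\{|a|\le r_0\}\subset S^{14}$ is just $D^3\times S^{11}$; $|w|=\sqrt{1-|a|^2}$ is not an extra direction.

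The missing idea, which is the paper's argument, is to change coordinates on the cylinder, where only the diffeomorphisms $f_p$ of $S^{11}$ are used. Write $D^4$ as a rounded $D^3\times[-1,1]$ with the family supported in the top face. Cut along $(S^{11}\times D^3\times\{0\})\cup(D^{12}\times\partial D^3\times\{0\})$, so that each half of the handle stays attached to the corresponding half of the cylinder. The lower half has identity transition and is $D^{12}\times D^3$ with a collar. On the upper cylinder apply $(w,p,s)\mapsto(f_p^{-1}(w),p,s)$. This map is the identity near the side faces. It cancels the top-face attachment, so the upper half is the same standard disk. It also moves the twist to the middle face, where the lower coordinate becomes $f_p$ of the upper one. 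In the explicit parametrization $\iota(p,w)=(r_0p,\sqrt{1-r_0^2|p|^2}\,w)$ this is exactly $J_{\widehat\beta_k}$, by your own formula, and it is the identity on the complementary $D^{12}\times S^2$.

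Finally, drop the claim that $J$ is isotopic to the identity. The path $f_{tp}$ does not stay trivial at $|p|=1$. Moreover, for the exotic members a smooth isotopy from $J$ to the identity would again make the twisted sphere standard. Your alternative Jacobian argument is enough. So is the simpler observation that $J=\mathrm{id}$ on the nonempty open set $|a|>r_0$ of the connected $S^{14}$.
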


\begin{proof}
For any conjugation-equivariant $\beta$, substituting $q=\beta(z)$
gives $\beta(J_\beta(z))=\beta(z)$. Hence
$J_\beta^{-1}=J_{\beta^{-1}}$. In our case $J_\beta$ is the identity
on the nonempty open subset $|a|>r_0$, so it preserves orientation.

The elementary equatorial decomposition is
\[
 S^{14}=(S^{11}\times D^3)
             \cup_{S^{11}\times S^2}(D^{12}\times S^2).
\]
The parameterization of its first piece can be fixed explicitly as
\[
 \iota(p,w)=\bigl(r_0p,\sqrt{1-r_0^2|p|^2}\,w\bigr),
 \qquad p\in D^3,\quad w\in S^{11}.
\]
The complementary piece is parameterized by
$(\theta,w)\mapsto(\sqrt{1-|w|^2}\,\theta,w)$, with
$\theta\in S^2$ and $|w|\le\sqrt{1-r_0^2}$.
In these actual coordinates,
\[
 J_{\widehat\beta_k}\bigl(\iota(p,w)\bigr)
       =\iota(p,f_p(w)),
\]
and the map is the identity on the complement. Thus this is an equality
of smooth attaching diffeomorphisms, not an identification merely up to
homotopy.
To see this as the actual surgery transition, write the parameter
$D^4$ in~\eqref{eq:surgery-model} as a rounded
$D^3\times[-1,1]$ and support the family in its top face.
Cut the boundary of the surgery construction across its middle.
The lower half has identity transition. It is the product model
\[
 (S^{11}\times D^3\times[0,1])
       \cup_{S^{11}\times D^3\times\{0\}}(D^{12}\times D^3),
\]
with corners rounded, hence is $D^{12}\times D^3\cong D^{15}$:
the cylindrical piece only adds a boundary collar.
On the cylindrical piece of the upper half make the coordinate change
\[
 (w,p,s)\longmapsto(f_p^{-1}(w),p,s).
\]
It cancels the top-face attachment and is the identity near the side
faces, so the upper half has the same standard disk model. On the
middle cutting face the lower coordinate is $f_p$ applied to the
upper coordinate. On the complementary face it is the identity.
The upper cylinder coordinate and the upper cap coordinate satisfy
$w_{\mathrm{old}}=f_p(w_{\mathrm{cap}})$. Thus
$w_{\mathrm{new}}=f_p^{-1}(w_{\mathrm{old}})$ agrees with the
unchanged cap coordinate, whereas the lower disk retains
$w_{\mathrm{old}}$. This is precisely $J_{\widehat\beta_k}$.
No extension of an individual $f_p$ over $D^{12}$ is asserted or
required: the change of coordinates is made on a cylinder.
The orientations are pulled back from~\eqref{eq:filling}.
\end{proof}

The radial extension $rz\mapsto rJ_{\widehat\beta_k}(z)$ is a
homeomorphism of a closed disk. Applying it to one side identifies the
double in~\eqref{eq:disk-presentation} with the standard topological
$S^{15}$. No smoothness at the center of this radial extension is asserted.

\section{The surgery filling and its smooth invariant}
\label{sec:topological-invariant}

The disk presentation proves that the manifolds are topological spheres,
but it does not yet distinguish their smooth structures. For that purpose
we compute an invariant from the filling $Z_k$. The bundle base detects
the first two Pontryagin classes. In degree twelve a new class appears
because of the handle, so one further submanifold is needed. The capped
Hopf fiber supplies it.

\subsection{The capped Hopf fiber}
The pair $(Z_k,W_k)$ has one relative cell of dimension twelve.
Since $W_k$ retracts onto $B$, restriction induces integral
cohomology isomorphisms in degrees four and eight. Use $x,y$ also
for the unique extensions of these classes to $Z_k$; in particular
$y^2=0$.
The degree-twelve cohomology group of $Z_k$ is free of rank two.
To determine its additional generator geometrically, cap the copy
of $D(\gamma_2)$ over the center of the surgery chart with the
handle core $D^{12}\times\{0\}$, obtaining $F\subset Z_k$.

\begin{lemma}\label{lem:capped-HP3}
The smooth manifold $F$ is diffeomorphic, without a specified
orientation, to $\HP^3$, and its normal bundle in $Z_k$ is the
trivial rank-four bundle. Under this diffeomorphism,
$x|_F=c_2(\gamma_3)$ and $y|_F=0$.
\end{lemma}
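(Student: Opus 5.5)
We identify $F$ as the disk bundle $D(\gamma_2)$ glued to a $12$-disk along its boundary. Over the center $p_0$ of the southern clutching chart, the fiber of $W_k$ is $D(\gamma_2)$: the disk bundle of the tautological bundle over $\HP^2\times\{p_0\}$. Its boundary is the fiber $S^{11}\times\{p_0\}$ of the product chart $S^{11}\times D^4$ along which the handle $D^{12}\times D^4$ is attached. The handle is attached by the identity near the center, because the family $f$ is the identity near the parameter basepoint and the southern chart is centered there. Hence the core $D^{12}\times\{0\}$ is glued to $\partial D(\gamma_2)=S^{11}$ by the identity, after corners are rounded. Now $D(\gamma_2)$ is the normal disk bundle of $\HP^2\subset\HP^3$, and its complement in $\HP^3$ is a $12$-disk, namely a neighborhood of the point $[0:0:0:1]$. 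The gluing of that complement is the standard identification of the unit sphere $S^{11}$ with the Hopf total space. So $F\cong D(\gamma_2)\cup_{\mathrm{id}}D^{12}\cong\HP^3$. The one point needing care is that the product-chart identification of the fiber with $S^{11}$ agrees with the standard inclusion $S^{11}\subset\HH^3$ used in $\HP^3$. Any discrepancy is a diffeomorphism of $S^{11}$ commuting with the Hopf projection; I will choose the chart so that this map is the identity.

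For the normal bundle, note first that $D(\gamma_2)$ lies in $W_k$ as the restriction of the bundle over $\HP^2\times\{p_0\}$. Its normal bundle in $W_k$ is the pullback of the normal bundle of $\HP^2\times\{p_0\}$ in $B$, which is the trivial bundle $T_{p_0}S^4$. The core $D^{12}\times\{0\}$ has normal bundle $D^{12}\times\RR^4$ in the handle. These two trivializations agree on $S^{11}$ because the handle's $D^4$ factor is exactly the parameter $D^4$ of the product chart; this is the meaning of ``All parameterizations and normal framings refer to this fixed product chart.'' They therefore glue to a global trivialization of the normal bundle of $F$. I expect this matching of framings across the seam to be the only real obstacle. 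It holds because the surgery uses the product framing, not a twisted one.

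For the cohomology, observe that $y$ is pulled back from $S^4$, and its restriction to $D(\gamma_2)$ factors through the point $\{p_0\}$. Since $H^4(F)\to H^4(D(\gamma_2))\cong H^4(\HP^2)$ is an isomorphism (the cap $D^{12}$ does not affect degree four), this gives $y|_F=0$. Similarly, $x|_{D(\gamma_2)}$ is the pullback of $c_2(\gamma_2)$ to $\HP^2$. Under $\HP^2\subset\HP^3$ we have $\gamma_3|_{\HP^2}=\gamma_2$, so $c_2(\gamma_3)$ restricts to $c_2(\gamma_2)$. The same degree-four isomorphism, together with the uniqueness of the extension of $x$ to $Z_k$, then gives $x|_F=c_2(\gamma_3)$. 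No orientation has to be specified, since the statement is unoriented.
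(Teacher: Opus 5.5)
Your route is the paper's: write $F=D(\gamma_2)\cup D^{12}$ and recognize it as $\HP^3$ with a ball removed and replaced; take the normal framing from the four parameter directions, extended by the product handle; and compute degree four through $\HP^2\subset\HP^3$. The normal-bundle and cohomology parts are fine.

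There is a gap at the one step you flag and then defer. You cannot ``choose the chart so that this map is the identity.'' The southern product chart is fixed before the surgery, and the handle is attached in those coordinates. So $Z_k$ and $F$ themselves depend on it, and re-choosing the chart changes the object the lemma is about. Nor is an arbitrary Hopf-preserving diffeomorphism of $S^{11}$ automatically harmless, since it need not extend smoothly over $D^{12}$ or over $D(\gamma_2)$. The check is also not vacuous. The normal bundle of $\HP^2\subset\HP^3$ is the quaternionic dual $\operatorname{Hom}_{\HH}(\gamma_2,\HH)$, so ``$D(\gamma_2)$ is the normal disk bundle'' hides a conjugation, and one must verify that it does not spoil the boundary gluing.

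What closes this, and is the paper's argument, is an explicit map needing no adjustment. In the fixed chart the central fibre is literally $S(\gamma_2)=S^{11}\subset\HH^3$. Send $([w],wa)$, with $|w|=1$ and $|a|\le1$, to $[w,\overline a]\in\HP^3$. This is well defined, since $(wq,q^{-1}a)$ goes to $(wq,\overline a q)$. For $a\ne0$, with $v=wa$, the last affine coordinate is $w\overline a^{-1}=v/|v|^2$. Hence the map is a diffeomorphism onto the complement of the open unit ball in that chart, with $a=0$ going to the standard $\HP^2$. On $|a|=1$ it is $v\mapsto v$, exactly the handle's boundary parameterization, so $F\cong\HP^3$ directly. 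If you prefer a softer argument: a fibrewise-orthogonal boundary discrepancy can be absorbed by its fibrewise linear extension over $D(\gamma_2)$, instead of by changing the chart.

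A minor point: the handle is attached by the identity in southern coordinates by construction. Your justification, via $f$ being the identity near the parameter basepoint, conflates the basepoint on the clutching equator $S^3$ with the centre of the southern disk.
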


\begin{proof}
Represent a point of $D(\gamma_2)$ as $([w],wa)$ with $|w|=1$
and $|a|\leq1$. The map
\[
 ([w],wa)\longmapsto[w,\overline a]\in\HP^3
\]
is well defined. Indeed, replacing $(w,a)$ by $(wq,q^{-1}a)$
replaces $(w,\overline a)$ by $(wq,\overline a q)$, which represents
the same right quaternionic line. Its image is the complement of
the interior of the standard unit ball in the last affine chart.
On $|a|=1$ the affine coordinate is
$w(\overline a)^{-1}=wa$, exactly the boundary parameterization
chosen for the handle. Thus adjoining its standard disk gives
$\HP^3$ with its usual smooth structure.
The four parameter directions of the fixed surgery chart give the
normal framing along $D(\gamma_2)$. The product handle extends
this very framing over the core. More explicitly, the neighborhood is
\[
 (D(\gamma_2)\times D^4_\delta)
 \cup_{S^{11}\times D^4_\delta}(D^{12}\times D^4_\delta)
 \cong F\times D^4_\delta,
\]
with identity transition in the last factor. This proves actual
triviality of the rank-four normal bundle.
The inclusion $\HP^2\subset\HP^3$ induces an isomorphism on
$H^4$, so the restriction of $x$ is $c_2(\gamma_3)$.
The parameter class $y$ vanishes on this capped fiber.
\end{proof}

The class of $F$ maps to a generator of $H_{12}(Z_k,W_k)$,
because its cap is precisely the single handle core. Therefore
restriction to $B$ and $F$ detects degree-twelve integral cohomology:
$x^3$ restricts to zero on $B$ and a primitive top class on $F$,
whereas $x^2y$ restricts to a primitive top class on $B$ and zero
on $F$. Consequently $x^3,x^2y$ are an integral basis of
$H^{12}(Z_k;\ZZ)$.

For the convention $x=c_2(\gamma_r)$, the projective-space
Pontryagin formula is
\begin{equation}\label{eq:HP-Pontryagin}
 p(T\HP^r)=\frac{(1-x)^{2r+2}}{1-4x};
\end{equation}
This follows from the real bundle identity
\[
 T\HP^r\oplus\operatorname{End}_{\HH}(\gamma_r)
       \cong (\gamma_r)^{\oplus(r+1)},
\]
and the formulas $p(\gamma_r)=(1-x)^2$ and
$p(\operatorname{End}_{\HH}(\gamma_r))=1-4x$;
see~\cite[proof of Theorem~3.1]{CG}, with generator $-x$. In particular,
\[
 p(T\HP^2)=1-2x+7x^2,
 \qquad p(T\HP^3)=1-4x+12x^2-8x^3.
\]

\begin{proposition}\label{prop:Pontryagin-filling}
The filling $Z_k$ is spin and its first three Pontryagin classes are
\begin{equation}\label{eq:global-Pontryagin}
 p_1=-4x-2ky,\qquad
 p_2=12x^2+6kxy,\qquad
 p_3=-8x^3-18kx^2y.
\end{equation}
\end{proposition}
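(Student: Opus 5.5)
The plan is to compute $p(TZ_k)$ by restriction to the two detecting submanifolds $B$ and $F$. By the discussion before the proposition, degree-four and degree-eight cohomology of $Z_k$ restrict isomorphically to $B$. In degree twelve, restriction to $B$ together with restriction to $F$ is injective, with $x^3$ and $x^2y$ as basis.

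\emph{Spin.} $Z_k$ deformation retracts onto $W_k$ with one cell of dimension twelve attached, so $H^1$ and $H^2$ with $\ZZ/2$ coefficients agree with those of $B$, which vanish. Hence $w_1=w_2=0$ and $Z_k$ is spin.

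\emph{Restriction to $B$.} Along the zero section, $TZ_k|_B=TW_k|_B\cong TB\oplus\xi_k$. We have $p(TB)=p(T\HP^2)\,p(TS^4)=1-2x+7x^2$, since $TS^4$ is stably trivial. For a quaternionic line bundle regarded as a complex rank-two bundle with $c_1=0$, $p_1=-2c_2$ and $p_2=c_2^2$, so
\[
p(\xi_k)=(1-(x+ky))^2=1-2x-2ky+x^2+2kxy,
\]
using $y^2=0$. Multiplying and truncating with $x^3=0$ and $y^2=0$ gives
\begin{align*}
p_1&=-4x-2ky,\\
p_2&=7x^2+x^2+4x^2+2kxy+4kxy=12x^2+6kxy,\\
p_3&=-14kx^2y-2kx^2y-2kx^2y=-18kx^2y.
\end{align*}
For $p_3$ the contributions are $(-2x)(2kxy)$, $(7x^2)(-2ky)$, and the $x^2\cdot(-2ky)$ part of $p(\xi_k)$ times $1$. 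I will recheck these signs carefully when writing the proof. Since $p_1$ and $p_2$ live in degrees where restriction to $B$ is an isomorphism, this determines them on $Z_k$ and matches~\eqref{eq:global-Pontryagin}.

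\emph{Restriction to $F$.} By Lemma~\ref{lem:capped-HP3}, $TZ_k|_F\cong TF\oplus\varepsilon^4$, so $p(TZ_k)|_F=p(T\HP^3)=1-4x+12x^2-8x^3$, with $x|_F=c_2(\gamma_3)$ and $y|_F=0$. The candidate $p_3=-8x^3-18kx^2y$ restricts to $-8x^3$ on $F$ and to $-18kx^2y$ on $B$. Since $x^3$ and $x^2y$ form a basis and restriction to $B\sqcup F$ is injective in degree twelve, $p_3$ equals the displayed class. As a consistency check, the restrictions of $p_1$ and $p_2$ to $F$ also agree with $p(T\HP^3)$, as they must.

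\emph{Main obstacle.} The main subtle point is orientation and sign bookkeeping. Pontryagin classes do not depend on orientation, but the identification $x|_F=c_2(\gamma_3)$ and the formula $p(\gamma)=(1-x)^2$ in the paper's convention (generator $-x$ in~\cite{CG}) must be used consistently. Once the restriction isomorphisms from the preceding paragraph are granted, the rest is bookkeeping.
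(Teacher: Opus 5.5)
Your argument is the paper's proof. Restriction to $B$ via $TZ_k|_B=TB\oplus\xi_k$ fixes $p_1$, $p_2$ and the $x^2y$-coefficient of $p_3$, the capped $\HP^3$ with trivial normal bundle fixes the $x^3$-coefficient $-8$, and $H^2(Z_k;\ZZ/2)=0$ gives spin.

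One bookkeeping fix for the $p_3$ tally. The bundle $p(\xi_k)$ has no degree-twelve part, and on $B$ the degree-twelve terms are $(7x^2)(-2ky)=-14kx^2y$ and $(-2x)(2kxy)=-4kx^2y$, while the $x^3$ terms vanish there. Your tally $-14k-2k-2k$ reaches the correct $-18k$ only because two slips cancel.
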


\begin{proof}
For a quaternionic line with second Chern class $e$, its underlying
real bundle has $p_1=-2e$ and $p_2=e^2$. Along the zero section
$TZ_k|_B=TB\oplus\xi_k$. Since $TS^4$ is stably trivial,
\eqref{eq:bundle-euler} and~\eqref{eq:HP-Pontryagin} yield
\[
 p(TZ_k)|_B=(1-2x+7x^2)
             \bigl(1-2(x+ky)+(x+ky)^2\bigr).
\]
Using $x^3=y^2=0$ on $B$, the first three terms give the asserted
$p_1,p_2$ and the coefficient $-18k$ of $x^2y$ in $p_3$.
Restrictions determine $p_1,p_2$ uniquely. By
Lemma~\ref{lem:capped-HP3}, $TZ_k|_F=TF\oplus\RR^4$, so the
remaining coefficient of $p_3$ is the coefficient $-8$ of $x^3$
in $p(T\HP^3)$. This proves all three identities.
Finally $H^2(Z_k;\ZZ/2)=0$, so the oriented tangent bundle has
$w_2=0$ and is spin.
\end{proof}

\subsection{Relative products and signature}
The filling has a boundary, so its characteristic numbers must be
interpreted as relative pairings. We spell out those pairings before
using the genus polynomials. This also fixes the orientation signs.

Let $j:H^*(Z_k,\partial Z_k)\to H^*(Z_k)$ be the natural map.
Because the boundary is a homotopy sphere, $j$ is an isomorphism
in degrees $4,8,12$. For any such class $c$, write
$\overline c=j^{-1}(c)$. A degree-sixteen product of positive-degree
classes is evaluated by lifting one factor to relative cohomology
and pairing with $[Z_k,\partial Z_k]$. The result is independent
of the factor chosen, by the compatibility of absolute and relative
cup products. Angle brackets below denote this evaluation.

Let $u=\operatorname{PD}_{Z_k}[B]\in H^4(Z_k,\partial Z_k)$,
with the normal orientation given by $\xi_k$. Its absolute
restriction to $B$ is the Euler class. The degree-four restriction
isomorphism therefore gives
\begin{equation}\label{eq:relative-lifts}
 j(u)=x+ky,\qquad
 \overline{x^2}=u(x-ky),\qquad \overline{xy}=uy.
\end{equation}

\begin{proposition}\label{prop:relative-numbers}
The filling has signature zero and relative Pontryagin numbers
\begin{equation}\label{eq:relative-Pontryagin-numbers}
 \langle p_1^4\rangle=256k,\qquad
 \langle p_1^2p_2\rangle=96k,\qquad
 \langle p_2^2\rangle=0,\qquad
 \langle p_1p_3\rangle=56k.
\end{equation}
\end{proposition}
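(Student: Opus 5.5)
The plan is to reduce every degree-sixteen evaluation to a pairing of the relative class $u$ with a class restricted to $B$, using \eqref{eq:relative-lifts}. By construction $u=\operatorname{PD}_{Z_k}[B]$, so for any absolute class $c$ of degree twelve,
\[
 \langle u\cdot c\rangle=\langle c|_B,[B]\rangle .
\]
On $B$ we have $x^3=0$, $y^2=0$ and $\langle x^2y,[B]\rangle=1$. The one point needing care is that $y^2=0$ also holds in $Z_k$, as recorded before Lemma~\ref{lem:capped-HP3}, while $x^3$ need not vanish in $Z_k$.

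First I compute the intersection form on $H^8(Z_k)$ in the basis $x^2,xy$. Lifting the first factor via \eqref{eq:relative-lifts} gives
\[
 \langle x^2\cdot x^2\rangle=\langle u(x-ky)x^2\rangle=\langle x^3-kx^2y,[B]\rangle=-k,
\]
\[
 \langle x^2\cdot xy\rangle=\langle u\,y\,x^2\rangle=1,\qquad
 \langle xy\cdot xy\rangle=\langle u\,xy^2\rangle=0 .
\]
So the form has matrix $\begin{pmatrix}-k&1\\1&0\end{pmatrix}$, which is unimodular with determinant $-1$. It is therefore indefinite of rank two, and the signature is $0$. The same computations give all the monomial numbers needed later:
\[
 \langle x^4\rangle=-k,\qquad \langle x^3y\rangle=1,\qquad \langle x^2y^2\rangle=0 .
\]
Independence of the factor chosen for lifting is already recorded in the paper, so these values are well defined.

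Next I substitute the classes of Proposition~\ref{prop:Pontryagin-filling}, reducing every product with $y^2=0$ but keeping $x^4$ and $x^3y$. This gives
\[
 p_1^2=16(x^2+kxy),\qquad p_1^4=256\bigl(x^4+2kx^3y\bigr)=256(-k+2k)=256k .
\]
In the same way:
\[
 p_1^2p_2=96\bigl(2x^4+3kx^3y\bigr)=96k,
\]
\[
 p_2^2=36\bigl(4x^4+4kx^3y\bigr)=0,
\]
\[
 p_1p_3=(4x+2ky)(8x^3+18kx^2y)=32x^4+88k\,x^3y=56k .
\]
These are the values asserted in \eqref{eq:relative-Pontryagin-numbers}.

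The main obstacle is the orientation and sign bookkeeping, not the algebra. Three conventions must be consistent for the $u$-pairing above to be correct. The normal orientation of $B$ by $\xi_k$ must be compatible with the orientation of $W_k$. The Euler class must equal $x+ky$ as in \eqref{eq:bundle-euler}. The value $\langle x^2y,[B]\rangle=+1$ must hold for the chosen parameter orientation. I would check these against the definition of $Z_k$ in \eqref{eq:filling}, and cross-check $\langle x^4\rangle=-k$ by an independent route. One route is through the capped fiber $F\cong\HP^3$ of Lemma~\ref{lem:capped-HP3}: its class pairs with $x^3$ primitively, so the formula must be consistent with expressing $\operatorname{PD}[F]$ in the basis dual to $x^3,x^2y$.
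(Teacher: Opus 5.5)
Your proposal is correct and is essentially the paper's proof. Both arguments reduce every degree-sixteen pairing to $B$ through the relative lifts $\overline{x^2}=u(x-ky)$ and $\overline{xy}=uy$, obtain $\langle x^4\rangle=-k$ and $\langle x^3y\rangle=1$, and then substitute the Pontryagin classes. The only cosmetic difference is that the paper writes the intersection form in the basis $j(ux)=x^2+kxy$, $j(uy)=xy$, which gives $\begin{pmatrix}k&1\\1&0\end{pmatrix}$, congruent to your $\begin{pmatrix}-k&1\\1&0\end{pmatrix}$.
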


\begin{proof}
The images of $ux,uy$ under $j$ are $x^2+kxy,xy$, an integral
basis of $H^8(Z_k)$. By the Thom identity their intersection
matrix is
\[
 \begin{pmatrix}
  \langle (x+ky)x^2,[B]\rangle &
  \langle (x+ky)xy,[B]\rangle\\
  \langle (x+ky)xy,[B]\rangle &
  \langle (x+ky)y^2,[B]\rangle
 \end{pmatrix}
 =\begin{pmatrix}k&1\\1&0\end{pmatrix}.
\]
Its determinant is $-1$, so it has one positive and one negative
eigenvalue and hence signature zero.
The same identity and~\eqref{eq:relative-lifts} give
\begin{equation}\label{eq:relative-x-products}
 \langle x^4\rangle=
    \langle (x-ky)x^2,[B]\rangle=-k,\qquad
 \langle x^3y\rangle=
    \langle (x-ky)xy,[B]\rangle=1.
\end{equation}
Products containing $y^2$ vanish. Substitution of
\eqref{eq:global-Pontryagin} gives, before evaluation,
\begin{align*}
 p_1^4&=256x^4+512kx^3y,\\
 p_1^2p_2&=192x^4+288kx^3y,\\
 p_2^2&=144x^4+144kx^3y,\\
 p_1p_3&=32x^4+88kx^3y.
\end{align*}
Here these expressions are used as relative evaluations, rather
than as nonzero degree-sixteen absolute classes on a manifold
with boundary. Applying~\eqref{eq:relative-x-products} proves
\eqref{eq:relative-Pontryagin-numbers}.
\end{proof}

\subsection{The Eells--Kuiper invariant}
We now combine the characteristic numbers into a quantity depending only
on the boundary sphere. The reason for the particular combination below
is that it cancels the top Pontryagin class: the remaining terms admit
the relative evaluations just computed, and changing the filling changes
the answer by an integer index.

We use the following normalization of the Eells--Kuiper
construction~\cite{EK}. The degree-sixteen genus polynomials are
\begin{align}
 \widehat A_4&=
 \frac{381p_1^4-904p_1^2p_2+208p_2^2+512p_1p_3-192p_4}
 {464486400},\label{eq:Ahat16}\\
 L_4&=\frac{-3p_1^4+22p_1^2p_2-19p_2^2-71p_1p_3+381p_4}
 {14175}.\label{eq:L16}
\end{align}
The combination
\begin{equation}\label{eq:N16}
 N_4=\widehat A_4+\frac{L_4}{65024}
 =\frac{153p_1^4-360p_1^2p_2+80p_2^2+192p_1p_3}{187269120}
\end{equation}
has no $p_4$ term and can therefore be evaluated using the relative
products just defined. For a homotopy fifteen-sphere $\Sigma$
bounding a compact spin manifold $Z$, put
\begin{equation}\label{eq:mu-definition}
 \mu(\Sigma)=\langle N_4(TZ),[Z,\partial Z]\rangle
                 -\frac{\sign(Z)}{65024}\quad\text{in }\mathbb Q/\ZZ.
\end{equation}
The original normalization in~\cite[p.~97, (6)--(7)]{EK} divides
by $a_k=4/(3+(-1)^k)$, so $a_4=1$. Moreover,
\cite[p.~99, (9)]{EK} gives $t_4=-1/(2^9\cdot127)=-1/65024$.
Thus no additional factor of two occurs in dimension fifteen.

This value does not depend on the spin filling. To verify this
directly, glue two fillings along their common oriented boundary,
reversing the second orientation. The sphere has a unique spin
structure, so the resulting closed sixteen-manifold $M$ is spin.
Relative characteristic numbers and signature are additive, and
the difference in~\eqref{eq:mu-definition} is
\[
 \langle N_4(TM),[M]\rangle-\frac{\sign(M)}{65024}
       =\widehat A(M)\in\ZZ
\]
by the signature theorem~\cite[p.~577, Theorem~(6.6)]{AS} and the
spin Dirac index theorem~\cite[p.~571, Theorem~(5.3)]{AS}.
The same argument proves oriented diffeomorphism invariance.
Using a disk filling gives $\mu(S^{15})=0$, and orientation
reversal gives $\mu(-\Sigma)=-\mu(\Sigma)$.

\begin{theorem}\label{thm:independent-topology}
For every integer $m$, the preceding construction gives a smooth
conjugation-equivariant map $\beta_m:S^{14}\to S^3$ for the
representation~\eqref{eq:specific-representation} and a homotopy sphere
\[
 \Sigma_m=D^{15}_+\cup_{J_{\beta_m}}D^{15}_-,\qquad
 \mu(\Sigma_m)=\frac{15m}{1016}\pmod{\ZZ}.
\]
The choice for $m=0$ may be made to give the standard sphere.
The $1016$ classes indexed by $0\leq m\leq1015$ have pairwise
different oriented smooth structures. All except $m=0$ are exotic,
and the $508$ spheres indexed by $1\leq m\leq508$ remain pairwise
nondiffeomorphic after forgetting orientation.
\end{theorem}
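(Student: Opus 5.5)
The plan is to set $\beta_m=\widehat\beta_{180m}$, built from a gauge family of Lemma~\ref{lem:gauge-existence} with $k=180m$. By~\eqref{eq:beta-equivariance} this map is smooth and conjugation-equivariant for~\eqref{eq:specific-representation}. Proposition~\ref{prop:disk-presentation} identifies $\Sigma_m=D^{15}_+\cup_{J_{\beta_m}}D^{15}_-$ with $\widehat\Sigma_{180m}=\partial Z_{180m}$, with the orientation induced from the filling. Lemma~\ref{lem:homotopy-sphere} shows it is a homotopy sphere. For $m=0$ I would take the constant family $f_p=\mathrm{id}$, which is allowed since $0\cdot b$ may be represented by the constant map. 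Then $b\equiv1$, $\beta_0\equiv1$ and $J_{\beta_0}=\mathrm{id}$, so $\Sigma_0$ is the standard double of $D^{15}$.

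Next I would compute $\mu$ from~\eqref{eq:mu-definition}, using the spin filling $Z_k$ (spin by Proposition~\ref{prop:Pontryagin-filling}). Substituting the numbers~\eqref{eq:relative-Pontryagin-numbers} into~\eqref{eq:N16} gives
\[
 \langle N_4\rangle=\frac{153\cdot256k-360\cdot96k+192\cdot56k}{187269120}
 =\frac{15360k}{187269120}=\frac{k}{12192}.
\]
Since $\sign(Z_k)=0$ by Proposition~\ref{prop:relative-numbers}, we get $\mu(\Sigma_m)=180m/12192=15m/1016$ in $\mathbb Q/\ZZ$. This is consistent with $\mu(\Sigma_0)=0$.

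For distinctness, note that $1016=2^3\cdot127$ is coprime to $15$. Hence $m\mapsto15m/1016$ is injective on $0\le m\le1015$ modulo $\ZZ$. Because $\mu$ is an oriented diffeomorphism invariant, the $1016$ oriented smooth structures are pairwise different. Since $\mu(S^{15})=0$, every $m\neq0$ is exotic.

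Forgetting orientation, I would use $\mu(-\Sigma)=-\mu(\Sigma)$. If $\Sigma_m\cong\Sigma_{m'}$ up to orientation, then $15m\equiv\pm15m'\pmod{1016}$, so $m\equiv\pm m'\pmod{1016}$. For $1\le m,m'\le508$, the minus sign forces $m+m'=1016$, hence $m=m'=508$. So in either case $m=m'$.

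The main obstacle is orientation bookkeeping, not arithmetic. The sign of $\mu$ must be computed for the same oriented manifold that appears in the disk presentation. I would check that the orientation on $\partial Z_k$ induced from $W_k$ (product orientation of~\eqref{eq:B-cohomology} and the complex fiber orientation) is the one Proposition~\ref{prop:disk-presentation} transports to $D^{15}_+\cup_{J}D^{15}_-$. Any global sign error would only replace $m$ by $-m$, which leaves the counting statements unchanged. I would also confirm that the relative evaluations use $[Z_k,\partial Z_k]$ with this same orientation, so that the sign conventions in~\eqref{eq:relative-lifts} are consistent.
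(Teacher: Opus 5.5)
Your proposal is correct and follows the paper's proof essentially step for step. It uses the same choice $k=180m$, the same substitution of the relative Pontryagin numbers into $N_4$ giving $k/12192=15m/1016$, the identity family for $m=0$, and the same coprimality argument and $m\equiv\pm m'\pmod{1016}$ argument for the oriented and unoriented counts. Your explicit mentions of $\sign(Z_k)=0$ and of the orientation bookkeeping only spell out what the paper takes from Propositions~\ref{prop:disk-presentation} and~\ref{prop:relative-numbers}.
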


\begin{proof}
Set $k=180m$, $\Sigma_m=\widehat\Sigma_k$, and
$\beta_m=\widehat\beta_k$. The disk presentation is
Proposition~\ref{prop:disk-presentation}. By
Proposition~\ref{prop:relative-numbers} and~\eqref{eq:N16},
\[
 \mu(\widehat\Sigma_k)
 =\frac{153(256k)-360(96k)+192(56k)}{187269120}
 =\frac{k}{12192}.
\]
Since $180/12192=15/1016$, this is the asserted value.
For $m=0$ use the identity family, whose disk twist is the identity.
As $\gcd(15,1016)=1$, the displayed invariant values are distinct
for $0\leq m\leq1015$, and the last $1015$ values are nonzero.
Their spheres cannot be diffeomorphic to the standard sphere.
An unoriented diffeomorphism either preserves or reverses
orientation, so a diffeomorphism between $\Sigma_i$ and $\Sigma_j$
would imply $i\equiv j$ or $i\equiv-j\pmod{1016}$.
For $1\leq i<j\leq508$, neither congruence is possible.
No assertion that $\mu$ classifies all homotopy fifteen-spheres
is needed.
\end{proof}

\begin{remark}\label{rem:topology-geometric-separation}
We have now obtained the two pieces of topological information needed
later: an exact disk presentation and an invariant distinguishing its
smooth type. The remaining question is geometric. We must put positive
curvature on this particular gluing, beginning with the connection
estimate in the next section.
\end{remark}
\section{Connection metrics with small fibers}
\label{sec:independent-connection}
\label{sec:connection}

A connection splits the tangent bundle of a principal bundle into
horizontal and vertical directions. If the base is positively curved,
shrinking the fibers preserves positivity on horizontal planes and
increases the curvature of vertical planes. The mixed planes are the
main difficulty. We will vary the fiber radius by a strictly concave
function, then estimate all six curvature blocks together. The result
applies up to the boundary of a compact base and will provide the
southern disk in Section~\ref{sec:disks}.

Let $Q$ be the bi-invariant metric on $\operatorname{Sp}(1)$ of constant
sectional curvature one, so that $[u,v]=2u\times v$ in an oriented
$Q$-orthonormal basis of its Lie algebra.  Our curvature convention is
\[
 R(A,B,C,D)=\langle
 (\nabla_A\nabla_B-\nabla_B\nabla_A-\nabla_{[A,B]})D,C\rangle.
\]
Thus $R(A,B,A,B)$ is positive on a round sphere.

\begin{proposition}[Small fibres with a strictly concave radius]
\label{prop:independent-connection}
Let $(B,g)$ be compact, possibly with smooth boundary, with
$\sec_g\geq\kappa>0$.  Let $\pi:P\longrightarrow B$ be a smooth principal
$\operatorname{Sp}(1)$-bundle, and let $\omega$ be any smooth connection,
with curvature $\Omega$.  All these data are smooth up to the boundary.
The following two tensors define finite constants depending only on
$(g,\omega)$:
\begin{align*}
 S(X\otimes u,Y\otimes v)
  &=\frac14\sum_i
   Q(\Omega(X,e_i),v)Q(\Omega(Y,e_i),u),\\
 L(X\wedge Y,Z\otimes u)
  &=-\frac12Q((D_Z\Omega)(X,Y),u),\\
 C_0&=\sup_B\|S\|_{\mathrm{op}},\qquad
 C_1=\sup_B\|L\|_{\mathrm{op}}.
\end{align*}
Here $D$ is the covariant derivative induced by $g$ and $\omega$,
$\{e_i\}$ is any $g$-orthonormal frame, and operator norms use $g$ and $Q$.
Suppose a smooth function $\phi$ on $B$ satisfies
\begin{equation}
\label{eq:independent-concavity}
 -\tfrac12\operatorname{Hess}_g\phi
 \ \geq\left(C_0+\frac{2C_1^2}{\kappa}+1\right)g.
\end{equation}
Then there exists $\varepsilon_0>0$ such that for every
$0<\varepsilon<\varepsilon_0$ the metric
\begin{equation}
\label{eq:independent-connection-metric}
 G_\varepsilon=\pi^*g+
 r_\varepsilon^2 Q(\omega,\omega),\qquad
 r_\varepsilon^2=\varepsilon\exp(\varepsilon\phi\circ\pi),
\end{equation}
has strictly positive sectional curvature at every point of $P$, including
its boundary.  The threshold is uniform on $P$.
\end{proposition}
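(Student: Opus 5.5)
The plan is to write the full curvature operator of $G_\varepsilon$ as a quadratic form on $\Lambda^2T_pP$ and bound it below. I will use a $G_\varepsilon$-adapted splitting $\Lambda^2=\Lambda^2\mathcal H\oplus(\mathcal H\otimes\mathcal V)\oplus\Lambda^2\mathcal V$ and show that the form is positive definite on every bivector, in particular on simple ones. A point of $P$ projects to a point $p\in B$. Horizontal vectors are horizontal lifts $X$ of $X\in T_pB$. Vertical vectors are fundamental fields $u^*$, $u\in\mathfrak{sp}(1)$, rescaled by $r=r_\varepsilon$ to be $G_\varepsilon$-unit when $|u|_Q=1$. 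So a general bivector is
\[
\Theta=\alpha+r^{-1}\textstyle\sum X_i\wedge u_i^*+r^{-2}\gamma,
\]
and I will write $a=|\alpha|$, $b$ for the $g\otimes Q$-norm of the mixed part, and $c=|\gamma|_Q$. Since $\pi$ remains a Riemannian submersion and the fibres are totally umbilic copies of $(\operatorname{Sp}(1),r^2Q)$, the curvature can be computed by O'Neill's formulas generalized to warped connection metrics with fibre size $r$ a function on $B$. Equivalently, one can compute directly with Koszul's formula in a frame of horizontal lifts and left-invariant vertical fields.

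The six blocks are estimated as follows.
\begin{enumerate}
\item[(hh)] $R(\alpha,\alpha)\ge \kappa a^2-\tfrac34 r^2|\Omega(\alpha)|^2-O(|dr|^2)\,a^2\ge(\kappa-O(\varepsilon))a^2$.
\item[(vv)] This block is $r^{-2}$ times the curvature of the unit $S^3$, minus gradient terms of size $|dr|^2/r^2=O(\varepsilon^2)$, so it is $\ge(\varepsilon^{-1}/2)c^2$ for small $\varepsilon$.
\item[(hv)] With $\log r=\tfrac12(\log\varepsilon+\varepsilon\phi)$ one gets
\[
-\frac{\operatorname{Hess}r}{r}=-\frac{\varepsilon}{2}\operatorname{Hess}\phi-\frac{\varepsilon^2}{4}d\phi\otimes d\phi .
\]
The mixed block is therefore $-\operatorname{Hess}r/r$ tensored with $Q$, plus the $A$-tensor contribution $r^2\cdot 4S$ up to sign, minus the cross terms $R(X,u,Y,v)$ generated by $A$. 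The key point is that the whole $\varepsilon$-order part is a bilinear form on $\mathcal H\otimes\mathcal V$, not only on simple tensors. Its $\Omega$-part is bounded below by $-\varepsilon C_0$ in operator norm. Together with \eqref{eq:independent-concavity}, this gives
\[
\text{(hv)}\ \ge\ \varepsilon\Bigl(\tfrac{2C_1^2}{\kappa}+1\Bigr)b^2-O(\varepsilon^2)b^2 .
\]
\item[(hh,hv)] This cross term is $2r\,L(\alpha,\cdot)$ plus $O(\varepsilon^{3/2})$ gradient terms, so it is bounded by $2\sqrt{\varepsilon}(C_1+o(1))ab$. By Cauchy--Schwarz,
\[
2\sqrt\varepsilon C_1ab\le\tfrac{\kappa}{2}a^2+\tfrac{2\varepsilon C_1^2}{\kappa}b^2,
\]
which is exactly what the constant $2C_1^2/\kappa$ in \eqref{eq:independent-concavity} is built to absorb. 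This leaves $(\kappa/2)a^2+\varepsilon b^2$.
\item[(remaining)] The blocks (hh,vv), (hv,vv) and the term $O(\varepsilon^{3/2})$ involve $\Omega$ composed with brackets and $dr/r$. Their size is at most $O(1)ac+O(\varepsilon)bc$. These are absorbed by $\tfrac{\kappa}{4}a^2+\tfrac{\varepsilon}{2}b^2+O(\varepsilon)c^2$, and the last term is dominated by $c^2/(2\varepsilon)$.
\end{enumerate}
Altogether $R(\Theta,\Theta)\ge\tfrac{\kappa}{8}a^2+\tfrac{\varepsilon}{4}b^2+\tfrac{1}{4\varepsilon}c^2>0$ once $\varepsilon<\varepsilon_0$.

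All the implicit constants come from suprema over $B$ of $|\Omega|,|D\Omega|,|\phi|,|d\phi|,|\operatorname{Hess}\phi|$ and the curvature of $g$. By compactness of $B$, with data smooth up to the boundary, these suprema are finite. Sectional curvature is a pointwise tensor computation, so boundary points need no separate treatment, and $\varepsilon_0$ is uniform on $P$.

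The main obstacle I expect is the mixed block (hv). There I must get the exact sign and normalization of the Hessian term and of the $A$-tensor cross terms $R(X,u^*,Y,v^*)$ for nonsimple mixed parts. This is what produces the tensor $S$ with its factor $\tfrac14$; an error there would break the matching with \eqref{eq:independent-concavity}. I would first carry out the Koszul computation for $G_\varepsilon$ with constant $r$, which is the standard connection metric. I would then add the conformal-fibre corrections coming from $dr$, checking both against the round Hopf fibration, where $\Omega$ is explicit.
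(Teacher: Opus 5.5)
There is a genuine gap in your block (hv). You treat the whole $\Omega$-dependent part of the mixed--mixed curvature as order $\varepsilon$, bounded below by $-\varepsilon C_0$. That is false. With $U=r^{-1}u^*$, the component $R(X,U,Y,V)$ contains the term $-\tfrac14Q(\Omega(X,Y),[u,v])$, which carries no power of $r$. It is O'Neill's $(\nabla_VA)_XY$ contribution: it comes from differentiating $A_XY=-\tfrac r2\Omega(X,Y)^{\mathrm{unit}}$ vertically, using the equivariance identity $U(\Omega(X,Y))=-r^{-1}[u,\Omega(X,Y)]$.

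As a symmetric form on $\mathcal H\otimes\mathcal V$, this term vanishes on every pure tensor $X\otimes u$. Hence it is indefinite whenever $\Omega\neq0$. Concretely, on $X_1\wedge U_1+X_2\wedge U_2$ it equals $-\tfrac12Q(\Omega(X_1,X_2),[u_1,u_2])$. Choosing $[u_1,u_2]$ to be a positive multiple of $\Omega(X_1,X_2)$ makes this negative of order one. The only positive contribution in that block is $\varepsilon(2C_1^2/\kappa+1)b^2$.

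So the curvature operator of $G_\varepsilon$ has negative eigenvalues for all small $\varepsilon$ wherever $\Omega\ne0$. Your stated goal of positive definiteness on every bivector is false, not merely unproved. Separately, $\sec_g\ge\kappa$ gives $R_B(\alpha,\alpha)\ge\kappa|\alpha|^2$ only for simple $\alpha$, so your (hh) bound also needs simplicity.

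The missing idea is to use simplicity in an essential way. Take a simple bivector $\xi=(X_1+U_1)\wedge(X_2+U_2)$. Its mixed part is $X_1\wedge U_2-X_2\wedge U_1$, and the bad term evaluates to $-\tfrac12Q(\Omega(X_1,X_2),[u_1,u_2])$. Since $|[u_1,u_2]|=2|u_1\wedge u_2|$, this is bounded by $\sup|\Omega|\,ac$. It therefore belongs with the $\Lambda^2\mathcal H$--$\Lambda^2\mathcal V$ terms, where $\tfrac\kappa4a^2$ and $c^2/(2\varepsilon)$ absorb it.

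The paper organizes this by adding the totally alternated four-form $\eta_r(X,Y,U,V)=-\tfrac14Q(\Omega(X,Y),[u,v])$. This form leaves every $R(A,B,A,B)$ unchanged and cancels the order-one mixed term exactly. It also moves the bracket coefficient in the $R(X,Y,U,V)$ block from $-\tfrac12$ to $-\tfrac34$. After the congruence $\operatorname{diag}(1,\varepsilon^{-1/2},\varepsilon^{1/2})$, that block is $O(\sqrt\varepsilon)$.

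With this modification, and restricting to simple bivectors (whose horizontal part is again simple), the rest of your bookkeeping matches the paper's proof. That includes the weights $a,\sqrt\varepsilon\,b,c/\sqrt\varepsilon$, the Cauchy--Schwarz absorption of $L$, and the use of \eqref{eq:independent-concavity}.
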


\begin{proof}
We give the curvature calculation and the estimate for arbitrary planes.
Until the final scaling argument, $r$ is any positive smooth basic function.
Set $a=d\log r$ and write $a^\sharp=\operatorname{grad}\log r$.
Horizontal vectors are identified with vectors on $B$.  For
$u\in\mathfrak{sp}(1)$, write $U=r^{-1}u^*$, where $u^*$ is the
fundamental field for the right principal action.  Consequently
$\langle U,V\rangle=Q(u,v)$.

For local horizontal lifts $X,Y$ and constant Lie algebra elements $u,v$,
the brackets are
\[
 [X,Y]=[X,Y]_B^{\mathrm H}-\Omega(X,Y)^*,\qquad
 [X,U]=-a(X)U,\qquad
 [U,V]=r^{-1}[u,v]^{\mathrm{unit}}.
\]
The last superscript means the normalized vertical identification
$w^{\mathrm{unit}}=r^{-1}w^*$.  An essential additional identity is
\begin{equation}
\label{eq:vertical-Omega-derivative}
 U\bigl(\Omega(X,Y)\bigr)=-r^{-1}[u,\Omega(X,Y)].
\end{equation}
It follows from the equivariance of curvature; its omission would change
the leading mixed curvature terms.

Define $F_{X,u}\in TB$ by
$g(F_{X,u},Y)=Q(\Omega(X,Y),u)$.  Koszul's formula gives
\begin{align}
\label{eq:connection-koszul}
 \nabla_XY&=(\nabla^B_XY)^{\mathrm H}
                 -\tfrac r2\Omega(X,Y)^{\mathrm{unit}},\nonumber\\
 \nabla_XU&=\tfrac r2F_{X,u}^{\mathrm H},\nonumber\\
 \nabla_UX&=\tfrac r2F_{X,u}^{\mathrm H}+a(X)U,\\
 \nabla_UV&=\tfrac1{2r}[u,v]^{\mathrm{unit}}
                  -Q(u,v)a^\sharp.\nonumber
\end{align}
These formulas apply to the stated local extensions; derivatives of
nonconstant coefficients must still be taken.  Substitution into the
definition of $R$, using \eqref{eq:vertical-Omega-derivative} and
$D\Omega=0$ in the exterior covariant sense, yields the following complete
list of curvature blocks.  In the formulas, $X,Y,Z,W$ are horizontal and
$U,V,W_v,Z_v$ correspond respectively to $u,v,w,z$ in the normalized
vertical identification:
\begin{align}
\label{eq:connection-HHHH}
 R(X,Y,Z,W)&=R_B(X,Y,Z,W)\\
 &\quad-\frac{r^2}{4}\bigl(2Q(\Omega(X,Y),\Omega(Z,W))\nonumber\\
 &\qquad\qquad+Q(\Omega(X,Z),\Omega(Y,W))
              -Q(\Omega(X,W),\Omega(Y,Z))\bigr),\nonumber\displaybreak[1]\\
\label{eq:connection-HHHV}
 R(X,Y,Z,U)&=-\frac r2Q((D_Z\Omega)(X,Y),u)\\
 &\quad+\frac r2\bigl(a(X)Q(\Omega(Y,Z),u)
                   -a(Y)Q(\Omega(X,Z),u)\bigr)\nonumber\\
 &\quad-r a(Z)Q(\Omega(X,Y),u),\nonumber\displaybreak[1]\\
\label{eq:connection-HHVV}
 R(X,Y,U,V)&=-\frac12Q(\Omega(X,Y),[u,v])\\
 &\quad+\frac{r^2}{4}\sum_i\bigl(
 Q(\Omega(X,e_i),v)Q(\Omega(Y,e_i),u)\nonumber\\
 &\hspace{44mm}-Q(\Omega(X,e_i),u)Q(\Omega(Y,e_i),v)\bigr),\nonumber\displaybreak[1]\\
\label{eq:connection-HVHV}
 R(X,U,Y,V)&=-\frac14Q(\Omega(X,Y),[u,v])
                 +r^2 S(X\otimes u,Y\otimes v)\\
 &\quad-Q(u,v)\bigl(\operatorname{Hess}\log r(X,Y)
                         +a(X)a(Y)\bigr),\nonumber\displaybreak[1]\\
\label{eq:connection-HVVV}
 R(X,U,V,W_v)&=\frac r2\bigl(
 Q(u,w)Q(\Omega(X,a^\sharp),v)
 -Q(u,v)Q(\Omega(X,a^\sharp),w)\bigr),\\
\label{eq:connection-VVVV}
 R(U,V,W_v,Z_v)&=(r^{-2}-|a|^2)
       \bigl(Q(u,w)Q(v,z)-Q(u,z)Q(v,w)\bigr).
\end{align}
For clarity, two of the terms in this computation can be checked directly.
In $R(X,U,Y,V)$ the horizontal component of
$\nabla_X\nabla_UV$ contributes
$\tfrac14Q(\Omega(X,Y),[u,v])$; differentiating the coefficient
of $\nabla_XV$ in the $U$ direction contributes
$\tfrac12Q(\Omega(X,Y),[u,v])$ with the opposite sign.
This gives the coefficient $-\tfrac14$ in
\eqref{eq:connection-HVHV}.  In \eqref{eq:connection-HVVV}, the
terms $\mp a(X)[u,v]/(2r)$ cancel.  Thus this block is of order
$r|a||\Omega|$, with no term of order $|a|/r$.

The term involving the Lie bracket in
\eqref{eq:connection-HVHV} has no small factor.  Bounding it by a constant
times the square of an arbitrary mixed bivector would not prove the
proposition.  Instead define a four-form $\eta_r$ by its only nonzero
horizontal--vertical type:
\begin{equation}
\label{eq:connection-four-form}
 \eta_r(X,Y,U,V)=-\tfrac14Q(\Omega(X,Y),[u,v]).
\end{equation}
Extend this by total alternation, and set its types other than $(2,2)$
equal to zero.  This is a globally defined smooth four-form: the defining
expression is invariant under the simultaneous adjoint action.
Let $\widehat R=R+\eta_r$, regarded as a symmetric bilinear form on
$\Lambda^2TP$.  Since
$\eta_r(A,B,A,B)=0$, one has
\begin{equation}
\label{eq:connection-same-sectional}
 \widehat R(A\wedge B,A\wedge B)=R(A,B,A,B).
\end{equation}
On the mixed block,
$\eta_r(X,U,Y,V)=+\tfrac14Q(\Omega(X,Y),[u,v])$,
which cancels exactly the first term of
\eqref{eq:connection-HVHV}.  The horizontal--vertical two-form block
\eqref{eq:connection-HHVV} remains bounded (its bracket coefficient
becomes $-3/4$).  This is the algebraic use of decomposability required
in the proof.

Now put $r^2=\varepsilon e^{\varepsilon\phi}$.  Then
\[
 a=\tfrac\varepsilon2d\phi,\qquad
 \operatorname{Hess}\log r=\tfrac\varepsilon2
                                      \operatorname{Hess}\phi.
\]
Use the orthogonal splitting
\[
 \Lambda^2TP=\Lambda^2H\ \oplus\ (H\otimes V)\ \oplus\ \Lambda^2V.
\]
Identifications with $TB$ and $\mathfrak{sp}(1)$ are the normalized ones
already fixed.  The matrices of $\widehat R$ after the congruence
\[
 T_\varepsilon=\operatorname{diag}
                 (1,\varepsilon^{-1/2},\varepsilon^{1/2})
\]
converge uniformly, in operator norm, to
\begin{equation}
\label{eq:connection-limit}
 T_\varepsilon^*\widehat R T_\varepsilon
 \ \longrightarrow
 \begin{pmatrix}
   R_B&L&0\\
   L^*&-\tfrac12\operatorname{Hess}\phi\otimes Q+S&0\\
   0&0&I
 \end{pmatrix}.
\end{equation}
This is uniform on the compact base and its boundary.  More explicitly,
the errors in the diagonal horizontal, mixed, and vertical blocks are
$O(\varepsilon)$; the horizontal--mixed error is $O(\varepsilon)$;
the horizontal--vertical block is $O(\sqrt\varepsilon)$; and the
mixed--vertical block is $O(\varepsilon^{3/2})$.
All constants are finite combinations of the uniform norms of
$\Omega,D\Omega,\phi,d\phi,\operatorname{Hess}\phi$ and $R_B$.
These claims follow directly from
\eqref{eq:connection-HHHH}--\eqref{eq:connection-VVVV}; no uncomputed
curvature block is included in the error.

The curvature operator $R_B$ need not be positive definite on all
bivectors.  Accordingly we only apply the limiting estimate to a simple
bivector $\xi=A\wedge B$.  Write
$\xi=\xi_H+\xi_M+\xi_V$, and put
\[
 h=|\xi_H|,\qquad M=\sqrt\varepsilon|\xi_M|,
 \qquad V=\varepsilon^{-1/2}|\xi_V|.
\]
More explicitly, evaluate the congruent form on
\(w=T_\varepsilon^{-1}\xi\), so
\(|w|^2=h^2+M^2+V^2\) and its horizontal component is still \(\xi_H\).
The horizontal part is itself simple,
$\xi_H=A_H\wedge B_H$.  Thus
$R_B(\xi_H,\xi_H)\geq\kappa h^2$.  By
\eqref{eq:independent-concavity}, the limiting quadratic form in
\eqref{eq:connection-limit} is at least
\begin{align*}
 \kappa h^2-2C_1hM+
           (2C_1^2/\kappa+1)M^2+V^2
 &\geq\tfrac\kappa2h^2+M^2+V^2.
\end{align*}
Uniform convergence implies that, after reducing $\varepsilon_0$, the
actual form is at least
\[
 \tfrac12\min\{\kappa/2,1\}\,(h^2+M^2+V^2)>0
\]
for every nonzero simple $\xi$ and every
$0<\varepsilon<\varepsilon_0$.  Equation
\eqref{eq:connection-same-sectional} finishes the proof.
\end{proof}

\begin{remark}
The numerical constants in
\eqref{eq:independent-concavity} are deliberately conservative.  If a
fixed smooth function $\psi$ satisfies
$-\operatorname{Hess}\psi\geq c g$ for some $c>0$, multiplying $\psi$
by a sufficiently large fixed constant supplies $\phi$.
The choice of that constant precedes, and is independent of, the choice
of sufficiently small $\varepsilon$.  Merely semidefinite concavity at a
boundary point does not meet the hypothesis.
\end{remark}

For use in the construction, Appendix~\ref{sec:connection-bounds}
records an explicit threshold $\varepsilon_*$ and the lower bound
$\sec_{G_\varepsilon}\ge\tfrac12\min\{\kappa/2,1\}\varepsilon$.
Its table follows from the curvature blocks just proved.
\section{A direct smoothing of the seam}\label{sec:smoothing}

Two smooth metrics with the same boundary metric can be joined to form
a continuous metric, but their normal derivatives need not agree. The
resulting corner is measured by the sum of the outward second fundamental
forms. We show that a nonnegative sum allows the corner to be smoothed
while retaining a positive sectional-curvature lower bound.

The useful quantity to average is the inverse tangential metric. For this
choice, the jump in the first derivative contributes a nonnegative term
to radial curvature. The proof must also control tangential and mixed
planes, as well as the region where the averaging is cut off. We give
these estimates below, following the inverse-metric method
of~\cite[Section 6.2, version 3]{DHZ}.

\begin{lemma}[Harmonic smoothing]\label{lem:harmonic}
Let two compact smooth Riemannian manifolds have isometric boundaries
and sectional curvatures at least \(\kappa>0\), including at their
boundaries. Identify those boundaries by the prescribed isometry.
Suppose the sum of their outward second fundamental forms is
nonnegative. Then, inside any prescribed boundary collar, the joined
continuous metric can be replaced by a smooth metric with sectional
curvature greater than \(\kappa/2\). It agrees exactly with the original
metrics outside that collar. The replacements converge uniformly to
the joined metric and smoothly on compact sets away from the seam.
\end{lemma}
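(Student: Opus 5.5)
Work in Gaussian normal coordinates from the common boundary $N$: on the joined manifold write the continuous metric as $dt^2+g_t$, $t\in(-\tau,\tau)$, with $t>0$ on one side, $t<0$ on the other, and $g_t$ smooth on each closed half. The hypothesis says $\partial_t g_t$ jumps at $t=0$ by $2(\II_++\II_-)\ge0$; more precisely the right derivative minus the left derivative of $g_t$ is $2(\II_++\II_-)$ with the sign conventions fixed in the Conventions. Pass to inverses: set $h_t=g_t^{-1}$, so the jump of $\partial_t h_t$ is $-2h(\II_++\II_-)h\le0$. This means $h_t$ is concave across the seam. Choose a mollifier $\chi_\sigma$ of width $\sigma\ll\tau$ and a cutoff $\lambda(t)$ equal to $1$ on $|t|\le\tau/3$ and $0$ for $|t|\ge2\tau/3$. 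Define the smoothed inverse metric
\[
 h^\sigma_t=\lambda(t)\,(h*\chi_\sigma)_t+(1-\lambda(t))\,h_t,
 \qquad g^\sigma=dt^2+(h^\sigma_t)^{-1}.
\]
Positive definiteness is automatic for small $\sigma$. The construction agrees with the original metrics outside the collar, converges uniformly, and converges smoothly away from $t=0$, since $h*\chi_\sigma\to h$ in $C^\infty$ on compact sets off the seam.

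For the curvature, use the Gauss–Codazzi–Riccati description of a metric $dt^2+g_t$. The curvatures involve $g_t$, $\partial_tg_t$, tangential derivatives, and, in the radial plane, $\partial_t^2 g_t$. For the first-order quantities, $h^\sigma$ and $\partial_th^\sigma$ are uniformly bounded, and $h^\sigma$, its tangential derivatives, and its $t$-derivative converge, off an interval of width $O(\sigma)$, to the original data. Tangential planes and the mixed Codazzi terms depend on $h$ and first derivatives only. On the transition interval these are convex combinations, up to $O(\sigma)$, of the values on the two sides. The main observation is that the radial curvature $R(T,\partial_t,\partial_t,T)$ equals $-\tfrac12\partial_t^2g(T,T)+\tfrac14|\partial_tg|^2$-type terms. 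Writing $\partial_t^2 g=-g(\partial_t^2h)g+2g(\partial_th)g(\partial_th)g$, the singular part of $\partial_t^2 h^\sigma$ is $(\text{jump})*\chi_\sigma\le0$. Hence it contributes $+\tfrac12 g(\text{jump}*\chi_\sigma)g\ge0$ to radial curvature, and this is the reason to average the inverse metric. The remaining radial terms are bounded, first-order, and again close to convex combinations of the one-sided values.

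The step I expect to be the main obstacle is positivity on arbitrary two-planes, not only on the coordinate blocks. I would treat a general plane spanned by $T+a\partial_t$ and $T'$ in the transition interval. I would write its curvature as $(1-s)K_-+sK_+$, evaluated on the corresponding plane, plus the nonnegative jump term times $|a|^2$, plus an error $o(1)$ as $\sigma\to0$, uniformly. Here $s=(\chi_\sigma*1_{t>0})(t)$. The convex-combination claim needs care: the curvature is not linear in the metric. However, at the seam the metrics and their tangential derivatives agree. The nonlinearity therefore enters only through products of the two one-sided $\partial_t h$, which I expect to rewrite as $(1-s)Q(\partial_th_-)+sQ(\partial_th_+)-s(1-s)Q(\text{jump})$ for the relevant quadratic $Q$. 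The sign of the last term has to be checked against the jump term, possibly by absorbing it using the second-order contribution, which has size $\sigma^{-1}$ against the bounded $s(1-s)$ term. Each $K_\pm\ge\kappa$ gives a total of at least $\kappa-o(1)>\kappa/2$. In the cutoff region $\tau/3\le|t|\le2\tau/3$ the metric is smooth on each side, so $h*\chi_\sigma\to h$ in $C^2$ there. The curvature there converges to the original curvature, which is at least $\kappa$, and choosing $\sigma$ small finishes the proof.
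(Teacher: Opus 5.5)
Your construction and overall strategy match the paper's. You mollify the inverse tangential metric in $t$ with a fixed cutoff, keep the distributional part of $\partial_t^2$ exactly, and compare with $(1-s)K_-+sK_+$ on the same simple bivector. First, there is a sign slip. On the side $t>0$ the outward normal is $-\partial_t$, so $\partial_tg(0^+)-\partial_tg(0^-)=-2(\II_++\II_-)\le0$. The inverse metric therefore has an upward kink across the seam: it is convex, not concave. In a $g_0$-orthonormal frame the singular part of $\partial_t^2h^\sigma$ is $\chi_\sigma(t)D\ge0$, with $D=2(\II_-+\II_+)$. As written, you assert that the jump is $\le0$ and then that $+\tfrac12 g(\text{jump}*\chi_\sigma)g\ge0$. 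The two errors cancel, but this sign is exactly what the method rests on.

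The genuine gap is the order-one nonlinearity on the strip $|t|\le\sigma$. You leave it open, and the fallback you suggest would fail. Your claim that tangential planes are convex combinations up to $O(\sigma)$ is false. Put $P_\pm=\partial_tg(0^\pm)$ and $S_\pm=\partial_t^2g(0^\pm)$, and let bars denote $(1-s)(\cdot)_-+s(\cdot)_+$ of one-sided values at $t=0$. The Gauss term $-\tfrac14\Delta_{\partial_tg}(u,z)$, with $\Delta_T(u,z)=T(u,u)T(z,z)-T(u,z)^2$, is quadratic in $\partial_tg^\sigma=\overline P+O(\sigma)$, and $\Delta_{\overline P}=\overline{\Delta_P}-s(1-s)\Delta_D$. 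This defect cannot be absorbed by the $\sigma^{-1}$ term, for three reasons:
\begin{itemize}
\item tangential curvature involves no second normal derivative, so no $\chi_\sigma(t)$ appears there;
\item in a tilted plane that term multiplies only the radial component;
\item $\chi_\sigma(t)$ may vanish at points where $0<s<1$.
\end{itemize}
The missing observation is that each defect already has a favorable sign. Use $\overline{P^2}-(\overline P)^2=s(1-s)D^2$, which needs no commutativity. Your formula for $\partial_t^2g$, applied to $h^\sigma$, then shows that the regular part of $\partial_t^2g^\sigma$ is $\overline S-2s(1-s)D^2+O(\sigma)$. Therefore the radial and tangential blocks of $g^\sigma$ are
\[
 \mathcal A^\sigma=\overline{\mathcal A}+\tfrac34s(1-s)D^2
   +\tfrac12\chi_\sigma(t)\,g^\sigma Dg^\sigma+O(\sigma),\qquad
 \mathcal B^\sigma=\overline{\mathcal B}+\tfrac14s(1-s)\Delta_D+O(\sigma).
\]
In the radial block, the loss $-\tfrac14s(1-s)D^2$ from $\tfrac14(\partial_tg)^2$ is overcompensated by the inverse averaging. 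In the tangential block, $\Delta_D\ge0$ by Cauchy--Schwarz because $D\ge0$. The Codazzi block is linear in $\partial_tg$, so it equals $\overline{\mathcal C}+O(\sigma)$. For $\xi=\partial_t\wedge A+u\wedge z$ this gives $R(\xi,\xi)\ge(1-s)R_-(\xi,\xi)+sR_+(\xi,\xi)-C\sigma|\xi|^2\ge(\kappa-C\sigma)|\xi|^2$. This is how the paper closes the seam estimate.
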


\begin{proof}
Choose normal geodesic collars on the two sides. After pulling back by
the boundary isometry, their common signed collar is
\((-c,c)\times Y\), and the metric has the form
\[
 g=dt^2+h(t),\qquad h(0^-)=h(0^+)=h_0.
\]
The functions \(h_-\) and \(h_+\) are smooth up to zero from their
respective sides. The outward normals are \(\partial_t\) on the
negative side and \(-\partial_t\) on the positive side. Put
\begin{equation}\label{eq:smooth-jump}
 P_\pm=h'_\pm(0),\qquad Q_\pm=h''_\pm(0),\qquad
 D=P_--P_+=2(\II_-+\II_+)\ge0.
\end{equation}
All tangential tensors at different normal parameters are identified
using the product collar. Fix the connection \(\nabla^0\) of \(h_0\)
in the tangential variables. In the calculations at a point of \(Y\),
use an \(h_0\)-orthonormal frame and matrix notation, so that \(h_0=I\).
Products of covariant tensors below mean contraction with \(h_0^{-1}\).
This is only a notation for tensorial identities, not an assumption
that \(h_0\) is flat.

\emph{The smoothing operation.}
Take a nonnegative even function \(\varrho\in C_c^\infty((-1,1))\)
with integral one. Choose a fixed smooth function \(0\le\theta\le1\)
equal to one on \(|t|\le c/3\) and zero on \(|t|\ge2c/3\).
For \(0<\zeta<c/12\), let
\begin{equation}\label{eq:harmonic-definition}
 \varrho_\zeta(t)=\zeta^{-1}\varrho(t/\zeta),\quad p(t)=h(t)^{-1},
 \quad
 p_\zeta=\theta(\varrho_\zeta*p)+(1-\theta)p,\quad
 g_\zeta=dt^2+H,\quad H=p_\zeta^{-1}.
\end{equation}
The convolution is taken only in \(t\). It is needed only where
\(|t|<2c/3\), so its arguments remain in the original collar.
It averages positive contravariant tensors in the same vector bundle;
thus \(p_\zeta\) is positive definite. The formula is smooth across
zero, agrees with the original metric near the ends, and defines a
global smooth metric. Tangential coordinate changes are independent
of \(t\), so the formula does not depend on the chosen charts.
Uniform convergence follows from the approximate-identity property
and uniform positivity of \(p\). Smooth convergence away from zero
follows by differentiating on one smooth side.

\emph{The complete collar curvature formulas.}
For a smooth family \(H(t)\), write \(R_H\) for its intrinsic
\((0,4)\) curvature with the convention
\(R(A,B,C,D)=\langle R(A,B)D,C\rangle\). The Christoffel symbols
in product coordinates are
\[
 \Gamma^t_{ij}=-\tfrac12H'_{ij},\qquad
 \Gamma^i_{tj}=\tfrac12H^{ik}H'_{kj},\qquad
 \Gamma^i_{jk}=\Gamma^i_{jk}(H),
 \qquad \Gamma^t_{tt}=\Gamma^i_{tt}=\Gamma^t_{ti}=0.
\]
Substitution into the definition of curvature gives its three blocks:
\begin{align}
 \mathcal A_H(u,v)&=R(\partial_t,u,\partial_t,v)
       =-\tfrac12H''(u,v)+\tfrac14(H'H^{-1}H')(u,v),
       \label{eq:smooth-radial}\\
 \mathcal B_H(u\wedge v,u\wedge v)
       &=R_H(u,v,u,v)
         -\tfrac14\{H'(u,u)H'(v,v)-H'(u,v)^2\},
         \label{eq:smooth-tangent}\\
 \mathcal C_H(u,v\wedge w)&=R(\partial_t,u,v,w)
       =\tfrac12\{(\nabla^H_wH')(u,v)
                         -(\nabla^H_vH')(u,w)\}.
         \label{eq:smooth-codazzi}
\end{align}
For example, the angular formula follows from the tangential
Christoffel symbols and the two terms containing \(\Gamma^t_{ij}\);
the radial formula includes the derivative of \(H^{-1}\) in
\(\Gamma^i_{tj}\). These formulas, their polarizations, and the
curvature symmetries give all components. In particular the Codazzi
block is included rather than inferred from the other two blocks.

\emph{Uniform expansions in the seam region.}
For \(|t|\le\zeta\), define
\[
 w=\int_0^\infty\varrho_\zeta(t-s)\,ds,\qquad
 J=\varrho_\zeta(t),\qquad v=w(1-w),
 \qquad \overline{T}=(1-w)T_-+wT_+.
\]
Here \(0\le w\le1\), \(J\ge0\), and \(w\) depends only on \(t\).
From \(p'=-p h'p\) and
\(p''=2p h'p h'p-p h''p\), the one-sided values at zero are
\[
 p(0)=I,\qquad p'_\pm(0)=-P_\pm,\qquad
 p''_\pm(0)=2P_\pm^2-Q_\pm.
\]
Since \(p\) is continuous, its distributional first derivative has
no atom. Its distributional second derivative is its ordinary
piecewise second derivative plus \(D\delta_0\).
Taylor's theorem on each half-collar therefore gives
\begin{align}
 p_\zeta&=I+E_0,&
 p'_\zeta&=-\overline P+E_1,\nonumber\\
 p''_\zeta&=2\overline{P^2}-\overline Q+JD+E_2,
 &\|E_0\|_{C^2(Y)}+\|E_1\|_{C^1(Y)}+\|E_2\|_{C^0(Y)}
 &\le C_0\zeta.\label{eq:smooth-taylor}
\end{align}
The norms use \(h_0,\nabla^0\). The constant \(C_0\) is independent
of \(t,y,\zeta\): it can be chosen from the suprema of the one-sided
derivatives \((\nabla^0)^i\partial_t^j p\), \(i+j\le3\),
on the compact closed half-collars. Every sampled point has normal
distance at most \(2\zeta\) from zero. Thus these estimates hold also
at \(t=\pm\zeta\); no lower bound on \(J\) is required.

Inverting the first estimate and differentiating gives
\(H=I+O_{C^2(Y)}(\zeta)\) and
\(H'=\overline P+O_{C^1(Y)}(\zeta)\). In the second derivative we
must keep the distributional term exactly:
\begin{equation}\label{eq:smooth-second}
 H''=2Hp'_\zeta Hp'_\zeta H
       -H(2\overline{P^2}-\overline Q+E_2)H-JHDH
     =\overline Q-2vD^2-JHDH+O(\zeta).
\end{equation}
The identity
\(\overline{P^2}-(\overline P)^2=vD^2\) holds without a
commutativity assumption. All terms in the regular part are bounded,
so replacing their factors by their limiting values costs
\(O(\zeta)\). The possibly unbounded \(JHDH\) is not approximated:
replacing it by \(JD\) would in general create an order-one error.
In intrinsic notation this exact tensor is
\(JHh_0^{-1}Dh_0^{-1}H\), which is nonnegative.

Let \(\mathcal A_\pm,\mathcal B_\pm,\mathcal C_\pm\) denote
the curvature blocks of the original metrics at \(0^\pm\), in the
common reference metric \(g_0=dt^2+h_0\). Equations
\eqref{eq:smooth-radial}--\eqref{eq:smooth-second} yield
\begin{align}
 \mathcal A_H&=\overline{\mathcal A}
                    +\tfrac34vD^2+\tfrac12JHDH+O(\zeta),
                    \label{eq:smooth-blocks}\\
 \mathcal B_H(b,b)&=\overline{\mathcal B}(b,b)
                    +\tfrac14v(\Lambda^2D)(b,b)
                    +O(\zeta)|b|_0^2,\nonumber\\
 \mathcal C_H&=\overline{\mathcal C}+O(\zeta).\nonumber
\end{align}
For completeness, the radial coefficient is obtained by subtracting
\(-\overline Q/2+\overline{P^2}/4\) from
\(-\overline Q/2+vD^2+(\overline P)^2/4\).
For the tangential coefficient, define
\(\Delta_T(u,v)=T(u,u)T(v,v)-T(u,v)^2\).
Then \(\overline{\Delta_P}-\Delta_{\overline P}=v\Delta_D\).
Here \((\Lambda^2D)(u\wedge v,z\wedge w)
=D(u,z)D(v,w)-D(u,w)D(v,z)\), so its value on
\(u\wedge v\) is exactly \(\Delta_D(u,v)\).
Also \(R_H=R_{h_0}+O(\zeta)\), since \(H-h_0=O_{C^2(Y)}(\zeta)\).
For the mixed block, \(\nabla^H-\nabla^0=O(\zeta)\) and
\(H'-\overline P=O_{C^1(Y)}(\zeta)\); differentiating tangentially
does not differentiate \(w\) or \(J\). This proves its stated
uniform error directly. These observations give finite constants
for every error in \eqref{eq:smooth-blocks}, depending only on the
fixed collar data. In particular none contains a hidden factor \(J\).

\emph{Every two-plane, including tilted planes.}
A simple bivector has the decomposition
\[
 \xi=(a\partial_t+u)\wedge(b\partial_t+z)
       =\partial_t\wedge A+B,\qquad
 A=az-bu,\quad B=u\wedge z,
 \qquad |\xi|_0^2=|A|_0^2+|B|_0^2.
\]
Both endpoint curvature forms evaluate the same simple \(\xi\),
so each is at least \(\kappa|\xi|_0^2\). The additional radial
terms in \eqref{eq:smooth-blocks} are nonnegative. Its tangential
term is nonnegative on \(B\), because \(D\ge0\) implies
\(D(u,u)D(z,z)-D(u,z)^2\ge0\). Including the mixed error gives,
for a finite constant \(C_1\),
\[
 R^{g_\zeta}(\xi,\xi)\ge
 \kappa|\xi|_0^2-C_1\zeta
       (|A|_0^2+2|A|_0|B|_0+|B|_0^2)
 \ge(\kappa-2C_1\zeta)|\xi|_0^2.
\]
Moreover \(|\xi|_{g_\zeta}^2\le(1+C_2\zeta)^2|\xi|_0^2\)
for a finite \(C_2\). Consequently
\begin{equation}\label{eq:smooth-lower}
 \secg_{g_\zeta}\ge
 \frac{\kappa-2C_1\zeta}{(1+C_2\zeta)^2}
 \quad\text{on }|t|\le\zeta.
\end{equation}
This argument uses sectional positivity only on simple bivectors;
it does not assume positivity of either endpoint curvature operator.

\emph{The remainder of the collar and exact matching.}
If \(|t|\ge\zeta\), the convolution samples just one smooth side.
Taylor's theorem applied to derivatives through order two, with
the one-sided third derivatives bounded up to zero, gives
\[
 \|p_\zeta-p\|_{C^2}\le C_3\zeta
       \quad (\zeta\le|t|\le2c/3).
\]
The fixed cutoff and its first two derivatives only change \(C_3\).
This estimate is uniform even as \(t\) approaches \(\pm\zeta\).
All inverse tensors stay in a uniformly positive set. The formulas
for inversion and their first two derivatives thus give
\(\|g_\zeta-g\|_{C^2}\le C_4\zeta\) on this region.
In a finite atlas on the closed half-collars, the coordinate formula
for curvature is a smooth function of \(g,g^{-1},\partial g,
\partial^2g\). Its derivatives are bounded on the compact set of
jets in question. It follows that the curvature numerator and the
squared area denominator each differ by at most \(C_5\zeta\) on
reference unit simple bivectors. Uniform metric equivalence then
gives \(\secg_{g_\zeta}\ge\kappa-C_6\zeta\) there.
This supplies the needed uniform curvature-openness estimate,
including the cutoff region, from the actual jets.

Increase the constants to be at least one. It suffices to choose
\[
 0<\zeta<\min\{c/12,\ \kappa/(8C_1),\ 1/(8C_2),
                         \kappa/(2C_6)\},
\]
as well as the fixed smallness threshold used to obtain uniform
invertibility. Equation~\eqref{eq:smooth-lower} is then greater
than \(\kappa/2\), and so is the bound on the rest of the collar.
Outside the support of \(\theta\), the metric is exactly the original
one, with all derivatives matching. All constants are fixed before
\(\zeta\) is chosen. This completes the proof.
\end{proof}

Appendix~\ref{sec:smoothing-bounds} gives a quantitative version of the
error estimates. Its constants depend only on fixed one-sided metric
jets, the chosen collar, and the original curvature lower bound. They
can therefore be fixed before the smoothing width is chosen.
\section{A direct construction of the two quotient disks}\label{sec:disks}

We now apply the two analytic results to the boundary maps constructed
in Section~\ref{sec:topological-construction}. A principal bundle carries
a second, free group action, and each cap of its quotient is a standard
disk. The bundle transition will give exactly the required boundary map.
We then choose the two cap metrics so that their boundaries agree and
the smoothing lemma applies.

Throughout this section
\[
 V=\RR^3\oplus\HH^3,\qquad
 \rho(q)(a,w)=(a,wq^{-1}),\qquad \beta:S(V)\longrightarrow S^3
\]
is a smooth map satisfying
\(\beta(\rho(q)z)=q\beta(z)q^{-1}\). Put
\(J_\beta(z)=\rho(\beta(z))z\). We keep the boundary parametrizations throughout the construction,
since they determine which smooth sphere the metric will belong to.

\subsection{The bundle, its actions, and the disk coordinates}
Write \(S^{15}\) in suspension coordinates \((t,z)\),
\(0\le t\le\pi\), \(z\in S(V)\). Set \(g(z)=\beta(z)^{-1}\) and
glue two trivial principal right \(S^3\)-bundles by
\begin{equation}\label{eq:transition}
 u_S=g(z)u_N.
\end{equation}
The transition is independent of \(t\). In both trivializations define
\begin{equation}\label{eq:star}
 q\star(t,z,u)=(t,\rho(q)z,qu).
\end{equation}
Conjugation equivariance of \(g\) makes this action well defined in the
overlap. It commutes with the principal right action and is free, including
over the poles: \(qu=u\) implies \(q=1\).

If \(\zeta\in D(V)\) is an extending disk coordinate on either cap, then
\(\rho(u^{-1})\zeta\) is an invariant quotient coordinate. Every orbit
has a unique representative with \(u=1\); hence each quotient cap is a
standard smooth disk, even though the representation on \(V\) has fixed
vectors. If \(z_N=\rho(u_N^{-1})z\), then on their common boundary
\begin{align*}
 z_S&=\rho(u_S^{-1})z
     =\rho\bigl(u_N^{-1}\beta(z)u_N\bigr)z_N\\
    &=\rho\bigl(\beta(z_N)\bigr)z_N=J_\beta(z_N).
\end{align*}
Thus the quotient gluing is exactly \(D^{15}_N\cup_{J_\beta}D^{15}_S\).

\subsection{A southern connection metric}
The connection estimate requires a strictly concave radius potential.
A round cap smaller than a hemisphere provides such a function. We
choose its size first, then fix a connection which is a product near
the cutting boundary and smooth near the pole.

Fix once and for all
\[
 a=2\pi/3,\qquad b=\sin a=\sqrt3/2.
\]
The southern cap is \(a\le t\le\pi\) with round metric
\(h_S=dt^2+\sin^2t\,h_{S(V)}\). Its radius from the south pole is
\(\pi/3\), less than a hemisphere. Choose a smooth radial function
\(\chi\) equal to zero on a collar of \(a\) and equal to one near
\(\pi\). Use the connection potentials
\begin{equation}\label{eq:potentials}
 A_N=\chi g^{-1}dg,\qquad A_S=(\chi-1)dg\,g^{-1}.
\end{equation}
They satisfy \(A_N=g^{-1}A_Sg+g^{-1}dg\), the transition law dictated
by \eqref{eq:transition}. The southern potential vanishes near its pole,
so it extends smoothly there. The north potential is only being used on
the overlap and vanishes near the cutting boundary. Both potentials
transform by \(\rho(q)^*A=qAq^{-1}\). Consequently
\(\omega=u^{-1}Au+u^{-1}du\) is invariant under \eqref{eq:star}.

The connection and its curvature bounds are now fixed, before any small
fiber parameter is chosen. Apply the connection-metric criterion of
Proposition~\ref{prop:independent-connection} to choose, explicitly,
\(A=4(C_0+2C_1^2+1)\), with its constants computed on this fixed round
cap and connection. Then \(\varphi(t)=-A\cos t\) meets that criterion.
Indeed
\[
 -\operatorname{Hess}_{h_S}\varphi=-A\cos t\,h_S
 \ \ge\ (A/2)h_S.
\]
For all sufficiently small \(\eps>0\), the metric
\begin{equation}\label{eq:southmetric}
 G_S=h_S+r_S^2Q(\omega,\omega),\qquad
 r_S^2=\eps e^{-\eps A\cos t}
\end{equation}
has positive sectional curvature on every source two-plane. The source
metric is smooth at the pole: the connection is zero there and
\(r_S(\pi-s)\) is smooth and even in \(s\). Its free isometric
\(\star\)-quotient is therefore a smooth positively curved disk by
the Riemannian submersion curvature formula.

Write its boundary data as
\begin{equation}\label{eq:southdata}
 r_a^2=\eps e^{\eps A/2},\qquad
 q=\frac{r'_S(a)}{r_a}=\frac{\eps A b}{2},\qquad
 \mu_S=\cot a=-1/\sqrt3.
\end{equation}
In the common north gauge the source metric is a doubly warped product
on a collar. For normalized angular and fiber components \(X,U\), the
outward second fundamental form is
\begin{equation}\label{eq:southII}
 \II_S(X+U,X+U)=-\mu_S|X|^2-q|U|^2.
\end{equation}
The sign follows from the outward normal \(-\partial_t\).

\subsection{The northern radial profile}
The following direct construction uses a radial profile different from
the implicit cubic profile in the eleven-dimensional paper. Its purpose
is to make all parameter choices and all-plane estimates reproducible.
Let \(\eta:\RR\to[0,1]\) be smooth and nondecreasing, zero on
\(( -\infty,1/4]\), and one on \([1/2,\infty)\). Such a function is
obtained by integrating a nonnegative smooth bump supported in
\((1/4,1/2)\) and dividing by its integral. Set
\(C_\eta=\max|\eta'|\).

For \(z=(a_0,w)\in S(V)\), the infinitesimal action is
\[
 K_z(\xi)=(0,-w\xi),\qquad |K_z\xi|=|w||\xi|\le|\xi|.
\]
We retain a symbol \(L\ge1\) for its uniform bound; \(L=1\) suffices
in the present application. Choose \(\delta>0\) with
\begin{equation}\label{eq:deltachoice}
 \delta\le b/2,\qquad
 \delta\le\frac1{48L^2Ab^3},\qquad
 \delta^3\le\frac1{2L^2Ab\max\{1,C_\eta\}}.
\end{equation}
This choice does not involve \(\eps\). Define
\begin{equation}\label{eq:newprofile}
 F(s)=\frac{s}{(1+s^2/\delta^2)^{1/4}},\qquad s\ge0.
\end{equation}
It is strictly increasing and unbounded. Let \(\ell\) be the unique
number with \(F(\ell)=b\); explicitly
\[
 \ell^2=\frac{b^4/\delta^2+
       \sqrt{b^8/\delta^4+4b^4}}2.
\]
In particular \(\ell\ge b\ge2\delta\).

Put \(d=r_aq\), and set
\begin{equation}\label{eq:northr}
 r(s)=r_a-d\int_s^\ell\eta(v/\delta)\,dv.
\end{equation}
Now choose the fibre size. The southern cap has \(\kappa=1\),
\(\sup|\phi|=A\), and \(\sup|d\phi|=Ab\). Using these data in
\eqref{eq:connection-explicit-threshold}, it is enough to take
\begin{equation}\label{eq:disk-epsilon-choice}
 0<\eps\le\min\left\{\varepsilon_*,\frac1{Ab\ell},
             \left(\frac{e^{-1/4}}{Ab}\right)^{2/3}\right\}.
\end{equation}
Indeed \(\eps A\le1\), so \(q\ell\le1/2\) and
\(d=(Ab/2)\eps^{3/2}e^{\eps A/4}\le1/2<1\).
The southern quotient then has sectional curvature at least \(\eps/4\).
Then
\begin{equation}\label{eq:rcontrol}
 \frac{r_a}{2}\le r\le r_a,\quad
 0\le r'\le d<1,\quad
 r''=\frac d\delta\eta'(s/\delta)\ge0,\quad
 \frac d{r^3}\le\frac{8q}{r_a^2}\le4Ab.
\end{equation}
On the northern source \(D^{15}\times S^3\), use
\begin{equation}\label{eq:northmetric}
 G_N=ds^2+F(s)^2h_{S(V)}+r(s)^2h_{S^3}.
\end{equation}
This is smooth at \(s=0\). In fact \(F\) extends as a smooth odd
function with \(F'(0)=1\), and \(r\) is constant for
\(s\le\delta/4\). The source metric is invariant under \eqref{eq:star}.

\begin{lemma}\label{lem:profile}
For \(0\le s\le\ell\), the profile \eqref{eq:newprofile} satisfies
\[
 FF'\le\delta,\qquad
 \frac{1-F'^2}{F^2}\ge\frac1{3b^2},\qquad
 -\frac{F''}{F}>0,
\]
where the last two expressions are interpreted by their smooth limits
at zero. If \(s\le\delta/2\), then \(-F''/F\ge\delta^{-2}\).
\end{lemma}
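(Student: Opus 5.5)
The plan is to verify each inequality by direct computation, writing everything in the dimensionless variable \(x=s/\delta\) and \(u=1+x^2\), so that \(F=\delta x\,u^{-1/4}\). The two analytic facts used are \(F(s)\le b\) on \([0,\ell]\), by monotonicity and \(F(\ell)=b\), and the choice \(\delta\le b/2\) from \eqref{eq:deltachoice}. No other parameter enters.

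\emph{Derivatives.} Differentiating gives
\[
 F'=u^{-5/4}\bigl(1+\tfrac{x^2}{2}\bigr),\qquad
 F''=-\frac{x}{\delta}\,u^{-9/4}\bigl(\tfrac32+\tfrac{x^2}{4}\bigr).
\]
\emph{First inequality.} We get \(FF'=\delta\,x(1+x^2/2)u^{-3/2}\). The claim \(FF'\le\delta\) then reduces, after squaring, to
\[
 x^2\bigl(1+x^2+\tfrac{x^4}{4}\bigr)\le(1+x^2)^3,
\]
which holds term by term.

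\emph{Third inequality.} We obtain
\[
 -\frac{F''}{F}=\delta^{-2}\,\frac{\tfrac32+\tfrac{x^2}{4}}{(1+x^2)^2}.
\]
This is manifestly positive, with smooth limit \(\tfrac32\delta^{-2}\) at \(s=0\). The function \(x\mapsto(\tfrac32+x^2/4)(1+x^2)^{-2}\) is decreasing, and at \(x=1/2\) it equals \((25/16)/(25/16)=1\). Hence \(-F''/F\ge\delta^{-2}\) for \(s\le\delta/2\).

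\emph{Second inequality.} This is the step needing some care, since a single crude estimate fails either near \(0\) or for large \(x\). We write
\[
 \frac{1-F'^2}{F^2}=\frac{u^{5/2}-(1+x^2/2)^2}{\delta^2x^2u^2},
\]
and split into two cases.
\begin{itemize}
\item For \(x\le1\): use \(u^{5/2}\ge u^2\) and
\[
 u^2-(1+x^2/2)^2=\tfrac{x^2}{2}\bigl(2+\tfrac32x^2\bigr)\ge x^2.
\]
The ratio is then at least \(1/(\delta^2u^2)\ge1/(4\delta^2)\ge1/b^2\), using \(\delta\le b/2\). This case also gives the smooth limit at \(s=0\).
\item For \(x\ge1\): use \(1+x^2/2\le\tfrac34u\), so the numerator is at least \(\tfrac7{16}u^{5/2}\). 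The ratio is then at least \(\tfrac7{16}u^{1/2}/(\delta^2x^2)=\tfrac{7}{16}F^{-2}\ge\tfrac{7}{16b^2}\), because \(F\le b\) on \([0,\ell]\).
\end{itemize}
In both cases the bound exceeds \(1/(3b^2)\). The only real obstacle is choosing this case split so that the small-\(x\) regime uses \(\delta\le b/2\) and the large-\(x\) regime uses \(F\le b\).
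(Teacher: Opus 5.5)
\textbf{Verdict.} Your proof is correct. The derivative formulas, the squared comparison for $FF'\le\delta$, and the monotonicity argument giving $-F''/F\ge\delta^{-2}$ for $x\le1/2$ are essentially the paper's own computations.

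\textbf{The second inequality.} This is the one place where your route differs, because the paper uses no case split. It first notes that $F'^2\le(1+x^2)^{-1/2}$, which holds because $1+x^2/2\le1+x^2$. Rationalizing $1-(1+x^2)^{-1/2}$ then gives
\[
 \frac{1-F'^2}{F^2}\ge\frac{1}{\delta^2\bigl(\sqrt{1+x^2}+1\bigr)}.
\]
Finally, the paper rewrites the denominator exactly as $F^2+\delta^2/\sqrt{1+x^2}+\delta^2$, which is at most $b^2+2\delta^2\le3b^2$.

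\textbf{Comparison.} The paper's version uses the same two facts you isolate, $F\le b$ on $[0,\ell]$ and $\delta\le b/2$. They enter additively in one uniform estimate valid on all of $[0,\ell]$, including the limit at $s=0$. So the case split is not actually forced, contrary to your closing remark. Your split at $x=1$ takes slightly more bookkeeping but gives sharper constants: $1/b^2$ for $x\le1$ and $7/(16b^2)$ for $x\ge1$. Either bound is more than enough for the later use of $1/(3b^2)$. In your first case, the bound $1/(\delta^2u^2)$ passes to the limit at $s=0$. That limit equals $3/(2\delta^2)$, so the endpoint interpretation causes no difficulty.
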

\begin{proof}
Put \(x=s/\delta\). Direct differentiation gives
\begin{equation}\label{eq:profilederivatives}
 F'=\frac{1+x^2/2}{(1+x^2)^{5/4}},\qquad
 -\frac{F''}{F}=\frac{6+x^2}{4\delta^2(1+x^2)^2}.
\end{equation}
Also
\[
 FF'=\delta\frac{x}{\sqrt{1+x^2}}
                   \frac{1+x^2/2}{1+x^2}\le\delta.
\]
Since \(F'^2\le(1+x^2)^{-1/2}\), rationalization gives
\[
 \frac{1-F'^2}{F^2}\ge
 \frac1{\delta^2(\sqrt{1+x^2}+1)}.
\]
The denominator equals
\(F^2+\delta^2/\sqrt{1+x^2}+\delta^2\le b^2+2\delta^2\le3b^2\).
Finally \((6+x^2)/[4(1+x^2)^2]\ge1\) when \(0\le x\le1/2\):
the numerator after subtracting the denominator is
\(2-7x^2-4x^4\), decreasing to zero at \(x=1/2\).
Both sectional expressions tend to \(3/(2\delta^2)\) at zero.
\end{proof}

\subsection{Every northern horizontal two-plane}
Positivity of the quotient follows from positivity on source planes
horizontal for the free action. Such a plane can contain radial,
angular, and fiber components simultaneously. We first express the
horizontal condition as a graph relation, then use it to control the
possibly negative mixed terms.

All components in the following calculation are normalized for the
source metric. At fiber coordinate \(u=1\), the vertical space of the
\(\star\)-submersion consists of \((FK_z\xi,r\xi)\) in angular and
fiber components. A horizontal vector therefore has the form
\begin{equation}\label{eq:graph}
 \lambda\partial_s+X+U,\qquad
 U=-\frac F rK_z^*X.
\end{equation}
Right translations give the same statement at every fiber point. The
projection of this horizontal graph onto the base tangent space is
injective. This is the property needed below, not freeness of the
representation on \(S(V)\).

For two horizontal vectors \((\lambda,X,U)\) and \((\mu,Y,W)\), define
\[
 R_0=|\lambda Y-\mu X|^2,\qquad H_0=|X\wedge Y|^2.
\]
The graph relation implies
\begin{align}
 |\lambda W-\mu U|^2&\le L^2F^2r^{-2}R_0,\label{eq:graph1}\\
 |X\otimes W-Y\otimes U|^2&\le2L^2F^2r^{-2}H_0.\label{eq:graph2}
\end{align}
For completeness, \eqref{eq:graph2} is an elementary linear-algebra
inequality. Replace \(Y\) by \(Y-cX\) so that \(X\perp Y\); replace
\(W\) by \(W-cU\) simultaneously. Both tensors and the angular wedge
are unchanged. The mixed squared norm becomes
\(|X|^2|W|^2+|Y|^2|U|^2\), which has the asserted bound. When
\(X,Y\) are dependent, both mixed and angular tensors vanish.

The Koszul formula for \eqref{eq:northmetric} gives five nonzero
sectional blocks: radial--angular, radial--fiber, angular, fiber, and
angular--fiber. There are no cross terms between these blocks. Thus its
sectional numerator on the pair above is exactly
\begin{align}\label{eq:warpedcurvature}
 \mathcal K_N={}&-\frac{F''}{F}R_0
 -\frac{r''}{r}|\lambda W-\mu U|^2
 +\frac{1-F'^2}{F^2}H_0\\
 &+\frac{1-r'^2}{r^2}|U\wedge W|^2
 -\frac{F'r'}{Fr}|X\otimes W-Y\otimes U|^2.\notag
\end{align}
One can obtain these blocks directly from
\(\nabla_X\partial_s=(F'/F)X\),
\(\nabla_U\partial_s=(r'/r)U\), and the unit-sphere curvatures of
the two factors. The negative last term is retained.

\begin{proposition}\label{prop:northpositive}
The quotient of \eqref{eq:northmetric} by \eqref{eq:star} has strictly
positive sectional curvature at every point and on every two-plane.
\end{proposition}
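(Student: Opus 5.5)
The plan is to prove positivity of the sectional numerator \eqref{eq:warpedcurvature} on every source two-plane that is horizontal for the free \(\star\)-action. The quotient statement then follows from O'Neill's formula. For a Riemannian submersion the curvature of a horizontal plane can only increase, and the \(\star\)-action is free and isometric by Section~\ref{sec:disks}. So it suffices to show \(\mathcal K_N>0\) for two linearly independent horizontal vectors \((\lambda,X,U)\) and \((\mu,Y,W)\) satisfying \eqref{eq:graph}. The two nonnegative quantities \(R_0\) and \(H_0\) will carry the positivity. The two possibly negative terms in \eqref{eq:warpedcurvature} will be absorbed into them using \eqref{eq:graph1}, \eqref{eq:graph2}, the bounds \eqref{eq:rcontrol}, Lemma~\ref{lem:profile}, and the choice \eqref{eq:deltachoice}.

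\emph{The radial--fiber term.} Since \(r''=(d/\delta)\eta'(s/\delta)\ge0\) is supported in \(\delta/4\le s\le\delta/2\), this term vanishes elsewhere. On its support, \eqref{eq:graph1} gives
\[
 \frac{r''}{r}|\lambda W-\mu U|^2\le\frac{dC_\eta}{\delta}\,\frac{L^2F^2}{r^3}R_0
 \le\frac{4AbC_\eta L^2F^2}{\delta}R_0 .
\]
Since \(F(s)\le s\le\delta/2\), this is at most \(AbC_\eta L^2\delta R_0\). The third condition in \eqref{eq:deltachoice} makes this at most \(\tfrac12\delta^{-2}R_0\). Lemma~\ref{lem:profile} gives \(-F''/F\ge\delta^{-2}\) on this range, so at most half of the radial--angular term is consumed. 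Away from the support, the radial--angular term \(-(F''/F)R_0\) is untouched and nonnegative.

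\emph{The angular--fiber term.} By \eqref{eq:graph2}, Lemma~\ref{lem:profile} and \eqref{eq:rcontrol},
\[
 \frac{F'r'}{Fr}|X\otimes W-Y\otimes U|^2\le2L^2\,FF'\,\frac{r'}{r^3}H_0
 \le 2L^2\delta\cdot4Ab\,H_0 .
\]
The second condition \(\delta\le1/(48L^2Ab^3)\) makes this at most \(H_0/(6b^2)\). That is half the lower bound \((1-F'^2)/F^2\ge1/(3b^2)\) from Lemma~\ref{lem:profile}. The fiber term is nonnegative because \(r'\le d<1\). Hence
\[
 \mathcal K_N\ge-\frac{F''}{2F}R_0+\frac{1}{6b^2}H_0+\frac{1-r'^2}{r^2}|U\wedge W|^2 ,
\]
and every coefficient is positive, including at \(s=0\) via the smooth limits in Lemma~\ref{lem:profile}.

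\emph{Strict positivity.} It remains to exclude \(R_0=H_0=0\). In that case \(\lambda Y=\mu X\) and \(X\wedge Y=0\), which forces the base components \((\lambda,X)\) and \((\mu,Y)\) to be linearly dependent. By the injectivity of the projection of the horizontal graph \eqref{eq:graph}, the fiber components are then determined by the base components, so the two horizontal vectors themselves are dependent, which is a contradiction. The main delicacy is the first step: the negative \(r''\) term is large, of order \(d/(\delta r^3)\). It is controlled only because its support lies in \(s\le\delta/2\), where both \(F\) is small and \(-F''/F\ge\delta^{-2}\). This is exactly why \(\eta\) is supported in \([1/4,1/2]\) and why the cubic condition on \(\delta\) appears.
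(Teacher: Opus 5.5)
Your proposal is correct and follows the paper's proof step for step. The paper likewise absorbs the $r''$ and $F'r'/(Fr)$ terms using \eqref{eq:graph1}--\eqref{eq:graph2}, \eqref{eq:rcontrol}, Lemma~\ref{lem:profile} and \eqref{eq:deltachoice}, arriving at the same coefficients $1/(2\delta^2)$ and $1/(6b^2)$, and it also concludes with injectivity of the graph projection for $R_0+H_0>0$ and O'Neill's formula.

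The one point to tighten is $s=0$, where $R_0$ and $H_0$ are not defined, so appealing to ``smooth limits'' of the coefficients is not yet an argument. The paper treats the center separately: the horizontal space is the whole tangent space of the disk, the source is a product near the central fiber, and the curvature there is $3/(2\delta^2)$. Alternatively, you could argue by continuity from your uniform lower bound for $0<s\le\delta/2$.
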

\begin{proof}
By \eqref{eq:rcontrol}, the fiber coefficient in
\eqref{eq:warpedcurvature} is positive. Equations
\eqref{eq:graph1}--\eqref{eq:graph2} give
\[
 \mathcal K_N\ge P_R R_0+P_H H_0,
\quad
 P_R=-\frac{F''}{F}-\frac{L^2F^2r''}{r^3},\qquad
 P_H=\frac{1-F'^2}{F^2}-\frac{2L^2FF'r'}{r^3}.
\]
Lemma~\ref{lem:profile} and \eqref{eq:deltachoice} imply
\[
 P_H\ge\frac1{3b^2}-8L^2Ab\delta\ge\frac1{6b^2}>0.
\]
The radial loss is supported in \(s\le\delta/2\). There \(F\le s\),
so it is at most
\[
 L^2\frac{\delta^2}{4}\frac{4AbC_\eta}{\delta}
 =L^2AbC_\eta\delta\le\frac1{2\delta^2}.
\]
The positive radial term is at least \(1/\delta^2\), leaving
\(P_R\ge1/(2\delta^2)\). Off this support the radial loss vanishes
and \(-F''/F\) remains strictly positive. An independent horizontal
pair projects to an independent base pair; hence \(R_0+H_0>0\).
This proves strict positivity for every horizontal plane at \(s>0\).

At the center, the representation acts trivially on the base point and
freely on the fiber. The horizontal space is the entire base tangent
space. The source is a product near that fiber and the base curvature
there is \(3/(2\delta^2)>0\). Smoothness and the submersion formula
complete the center argument, without a singular polar-coordinate limit
being used as a substitute for smoothness. Finally O'Neill's formula
adds a nonnegative term to the horizontal sectional curvature.
\end{proof}

For a quantitative check, put \(X_*=\ell/\delta\) and
\[
 c_R=\min\left\{\frac1{2\delta^2},
       \frac{6+X_*^2}{4\delta^2(1+X_*^2)^2}\right\},\qquad
 H_*=1+4L^2b^2/r_a^2.
\]
The radial expression in \eqref{eq:profilederivatives} decreases with
\(x\ge0\). The squared norm of a graph vector is at most \(H_*\)
times the squared norm of its base projection; the squared area is
therefore enlarged by at most \(H_*^2\).
Consequently the northern quotient has the positive lower bound
\begin{equation}\label{eq:northlower}
 \secg_N\ge H_*^{-2}\min\{c_R,1/(6b^2)\}.
\end{equation}
No sampling of planes is used to obtain this bound.

\subsection{Boundary identification and smooth gluing}
The two disks are now positively curved up to their boundaries. To apply
Lemma~\ref{lem:harmonic}, it remains to compare their boundary tensors
in the same gauge. This calculation is also where the metric construction
is tied to the exact smooth attaching map.

At \(s=\ell\) the northern data satisfy
\[
 F=b,\quad r=r_a,\quad r'/r_a=q,\quad
 \mu_N=F'(\ell)/b>0.
\]
Both source boundaries, expressed in the common north gauge, have the
same metric \(b^2h_{S(V)}+r_a^2h_{S^3}\) and the same free action.
Their quotient boundary metrics therefore agree under precisely
\(J_\beta\). Both source unit normals are horizontal for the quotient.
Taking their second fundamental forms on a common horizontal lift gives
\[
 \II_N(X+U,X+U)=\mu_N|X|^2+q|U|^2,
\quad
 (J_\beta^*\II_S)(X+U,X+U)=-\mu_S|X|^2-q|U|^2.
\]
Thus the fiber terms cancel and
\begin{equation}\label{eq:seam}
 \II_N+J_\beta^*\II_S=(\mu_N-\mu_S)|X|^2>0
\end{equation}
on every nonzero quotient boundary tangent vector. Indeed the graph
relation forces \(X\ne0\). More explicitly, relative to the common
quotient boundary metric \(h\),
\[
 \II_N+J_\beta^*\II_S\ge
 \frac{\mu_N-\mu_S}{1+L^2b^2/r_a^2}\,h.
\]

The remaining task is to smooth the corner while preserving curvature.
Lemma~\ref{lem:harmonic}, proved in full in
Section~\ref{sec:smoothing}, does this by convolving the inverse
tangential metric in a signed normal collar. The strict inequality
\eqref{eq:seam} supplies its derivative-jump condition. Take the north
disk as the negative side of the signed collar and pull back the south
collar by $J_\beta$. Then $D=2(\II_N+J_\beta^*\II_S)>0$, with exactly
the sign convention used in that lemma. Smoothing changes the metric
inside the collar and preserves the prescribed attaching map.

\begin{theorem}\label{thm:diskconstruction}
Every smooth conjugation-equivariant map \(\beta:S(\RR^3\oplus\HH^3)
\to S^3\) for the representation above determines a smooth positively
curved metric on \(D^{15}_N\cup_{J_\beta}D^{15}_S\).
\end{theorem}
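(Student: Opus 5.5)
The plan is to assemble the ingredients of this section in the order they were built, checking at each stage that the hypotheses of the two analytic results are met on exactly the data attached by \(J_\beta\). First fix \(a=2\pi/3\), the cutoff \(\chi\), and the potentials \eqref{eq:potentials}; this fixes the connection \(\omega\) on the southern source \([a,\pi]\times S(V)\times S^3\), together with the constants \(C_0,C_1\) of Proposition~\ref{prop:independent-connection} computed on the round cap. Then set \(A=4(C_0+2C_1^2+1)\) and \(\varphi=-A\cos t\). Since \(\cos t\le-1/2\) on the cap, \(\varphi\) satisfies \eqref{eq:independent-concavity} with \(\kappa=1\). Next choose \(\delta\) by \eqref{eq:deltachoice}; this choice is independent of \(\eps\). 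Then \(\ell\) is determined by \(F(\ell)=b\), and finally \(\eps\) is chosen by \eqref{eq:disk-epsilon-choice}. The order matters: each constant is fixed before the next parameter that depends on it.

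With these choices, Proposition~\ref{prop:independent-connection} makes \(G_S\) positively curved on the source, up to and including the boundary \(t=a\). Because the connection vanishes near the pole and \(r_S\) is even there, \(G_S\) is smooth at the pole. The \(\star\)-action is free and isometric: \(\omega\) is \(\star\)-invariant and \(r_S\) depends only on \(t\). O'Neill's formula then gives a positively curved southern disk, and the explicit threshold yields the bound \(\ge\eps/4\). On the northern side, \eqref{eq:rcontrol} and Lemma~\ref{lem:profile} feed Proposition~\ref{prop:northpositive}, which produces a positively curved northern disk with the lower bound \eqref{eq:northlower}. The description of the quotient charts via \(\rho(u^{-1})\zeta\) shows that the two quotients are standard disks glued by exactly \(J_\beta\).

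Next I verify the hypotheses of Lemma~\ref{lem:harmonic}. Both boundaries, written in the north gauge, are \(b^2h_{S(V)}+r_a^2h_{S^3}\) with the same free action. Their quotients are therefore isometric via \(J_\beta\), so the boundary identification is the prescribed attaching map. The point needing care is that each source unit normal is horizontal for the \(\star\)-submersion, being radial and \(\star\)-invariant. Consequently the quotient second fundamental form equals the source one restricted to horizontal lifts. It then follows from \eqref{eq:southII} and the northern values \(r'/r=q\), \(\mu_N=F'(\ell)/b>0\) that the fiber terms cancel, leaving \eqref{eq:seam}, which is positive because \(\mu_S=-1/\sqrt3<0\). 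Taking \(\kappa\) to be the minimum of the two lower bounds, Lemma~\ref{lem:harmonic} smooths the seam inside a collar and gives curvature above \(\kappa/2\) everywhere. The metric is unchanged outside the collar, so it is smooth on \(D^{15}_N\cup_{J_\beta}D^{15}_S\).

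The step I expect to be the main obstacle is the boundary matching, in particular the claim that the southern fiber term in \eqref{eq:southII} exactly cancels the northern one. This requires \(r'(\ell)=d=r_aq\) with the correct sign convention for the outward normals, and it also requires the quotient tangential metrics to coincide after pulling back by \(J_\beta\), not merely to be isometric abstractly. I would check this first in the common north gauge: over the collar the southern connection vanishes, since \(\chi=0\), so \(G_S\) is there a doubly warped product \(dt^2+\sin^2t\,h_{S(V)}+r_S^2h_{S^3}\) of the same form as \(G_N\). With that in hand, the identification reduces to comparing these two warped products at \(t=a\), \(s=\ell\).
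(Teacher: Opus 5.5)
Your proposal is correct and follows essentially the same route as the paper. It uses the same order of choices (\(\chi,\omega\); \(A\); \(\eta,\delta,\ell\); \(\eps\); then the collar bounds and \(\zeta\) inside Lemma~\ref{lem:harmonic}), combines the same two curvature inputs into \(\kappa_*=\min\{\eps/4,\kappa_N\}\), and uses the same north-gauge comparison of the doubly warped collars to obtain \eqref{eq:seam}. The only details you leave implicit are that \(\eta\) (through \(C_\eta\)) must be fixed before \(\delta\), and that \(r'(\ell)=d\) relies on \(\eta(\ell/\delta)=1\), which holds because \(\ell\ge2\delta\).
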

\begin{proof}
The southern connection criterion, Proposition~\ref{prop:northpositive},
the quotient-coordinate calculation, and \eqref{eq:seam} verify every
hypothesis of Lemma~\ref{lem:harmonic}. The order of choices is
\[
 \beta,\chi,\omega\ ;\ A\ ;\ \eta,\delta,\ell\ ;\ \eps
 \ ;\ \text{collar bounds}\ ;\ \zeta.
\]
All connection norms and profile constants are finite when the next
choice is made, so no smallness requirement is circular.
More precisely, define
\[
 \kappa_N=H_*^{-2}\min\{c_R,1/(6b^2)\},\qquad
 \kappa_* = \min\{\eps/4,\kappa_N\}.
\]
Both closed disks have sectional curvature at least \(\kappa_*>0\).
Lemma~\ref{lem:harmonic} gives a smooth metric on the glued sphere
with sectional curvature greater than \(\kappa_*/2\).
\end{proof}
\section{A fixed-choice specification for the first exotic member}
\label{sec:choices}

The preceding arguments allow several auxiliary choices. We now specify
them for $m=1$, so that a fixed smooth gauge family determines a definite
metric by the displayed formulas. The input is a jointly smooth family
$f_1:D^3\to\mathcal G_2$ of evaluation degree $180$, equal to the
identity near $\partial D^3$, as provided by
Lemma~\ref{lem:gauge-existence}. Set $r_0=1/2$ in
\eqref{eq:explicit-beta}, obtaining $\beta_1$, and set
$g=\beta_1^{-1}$. This section specifies the subsequent analytic choices;
it does not supply a closed coordinate formula for $f_1$.

\paragraph{Auxiliary functions.}
Define on $\RR$
\[
 E(s)=\begin{cases}e^{-1/s},&s>0,\\0,&s\leq0,\end{cases}
 \qquad
 \mathcal H(s)=\frac{E(s)}{E(s)+E(1-s)}.
\]
The denominator is positive everywhere. The function $\mathcal H$ is
smooth and nondecreasing, equals zero for $s\leq0$, and equals one for
$s\geq1$. For $0\leq s\leq1$ its denominator is at least $e^{-2}$,
while $0\leq E'(s)\leq4e^{-2}$ wherever $E'$ is nonzero. Differentiating
the quotient therefore gives $0\leq\mathcal H'\leq4$.
With $a=2\pi/3$ and $b=\sqrt3/2$, choose
\[
 \eta(v)=\mathcal H(4v-1),\qquad
 \chi(t)=\mathcal H\left(\frac{3(t-a)}{\pi-a}-1\right).
\]
Then $C_\eta\leq16$, and $\chi$ vanishes for
$t\leq a+(\pi-a)/3$ and equals one for
$t\geq a+2(\pi-a)/3$. These choices define the smooth connection
\eqref{eq:potentials}, hence $C_0,C_1$ and the curvature norm
$\sup|\Omega|$ used in Appendix~\ref{sec:connection-bounds}, before
the fibre size is chosen.

\paragraph{Disk parameters.}
Take $L=1$ and set
\[
 A=4(C_0+2C_1^2+1),\qquad
 \delta=\frac12\min\left\{\frac b2,\frac1{48Ab^3},
                                    \left(\frac1{32Ab}\right)^{1/3}\right\}.
\]
This satisfies \eqref{eq:deltachoice} using $C_\eta\leq16$.
Use the profile $F(s)$ in~\eqref{eq:newprofile} and the unique
$\ell>0$ with $F(\ell)=b$. Evaluate $\varepsilon_*$ from
\eqref{eq:connection-explicit-threshold} with $n=15$, base lower bound
$1$, $\Phi=A$, and $P=Ab$, and choose
\begin{equation}\label{eq:fixed-epsilon}
 \eps=\frac12\min\left\{\varepsilon_*,\frac1{Ab\ell},
                          \left(\frac{e^{-1/4}}{Ab}\right)^{2/3}\right\}.
\end{equation}
Define $r_a,q,d,r_S$ as in Section~\ref{sec:disks}, and write $r_N$
for the function $r$ in \eqref{eq:northr}.
They give $q\ell\leq1/2$, $d\leq1/2$, and the common disk lower bound
\[
 \kappa_*=
 \min\left\{\eps/4,\ H_*^{-2}\min\{c_R,1/(6b^2)\}\right\}>0.
\]

\paragraph{The actual quotient cometrics in a common collar.}
Put $c=\min\{\ell/4,(\pi-a)/12\}$ and let $\tau$ be the signed
normal parameter, with $s=\ell+\tau$ on the north side and
$t=a+\tau$ on the south side. In the common north gauge,
$\chi=0$ throughout this collar. On $Y=S(V)$ let
$\mathcal B=h_{S(V)}^{-1}$ and
\[
 \mathcal T_z=\sum_{\alpha=1}^3K_z(e_\alpha)\otimes K_z(e_\alpha),
\]
where $(e_1,e_2,e_3)$ is any $Q$-orthonormal basis of
$\mathfrak{sp}(1)$. The sum is independent of that basis.
The tangential quotient cometrics, with the southern collar pulled
back by the prescribed $J_{\beta_1}$, are exactly
\begin{align}\label{eq:fixed-cometrics}
 p_-(\tau)&=F(\ell+\tau)^{-2}\mathcal B
                    +r_N(\ell+\tau)^{-2}\mathcal T,&&-c<\tau\leq0,\nonumber\\
 p_+(\tau)&=\sin(a+\tau)^{-2}\mathcal B
                    +r_S(a+\tau)^{-2}\mathcal T,&&0\leq\tau<c.
\end{align}
Here $r_N(\ell+\tau)=r_a+d\tau$, since
$\ell-c\geq3\ell/4\geq3\delta/2$ lies beyond the transition of $\eta$.
To check the formula, at fibre coordinate $u=1$ the quotient map
$(z,u)\mapsto\rho(u^{-1})z$ has differential
$(v,\xi)\mapsto v-K_z\xi$. A covector $\lambda\in T_z^*Y$
therefore pulls back to $(\lambda,-K_z^*\lambda)$, whose squared
dual norm for the source product metric is
\[
 f^{-2}\mathcal B(\lambda,\lambda)
          +r^{-2}\sum_\alpha\lambda(K_z e_\alpha)^2.
\]
Pullback preserves the norm of a quotient covector, proving
\eqref{eq:fixed-cometrics}. There are no radial cross terms. In
particular $\tau$ is a normal geodesic coordinate and
$h_0=(b^{-2}\mathcal B+r_a^{-2}\mathcal T)^{-1}$.

\paragraph{Fixed smoothing choices and scope.}
Define
\[
 \rho(u)=\frac{E(1-u^2)}{\int_{-1}^1E(1-v^2)\,dv},\qquad
 \varrho(u)=2\rho(2u),\qquad
 \theta(\tau)=1-\mathcal H\left(\frac{9\tau^2/c^2-1}{3}\right).
\]
The dilation makes $\varrho\in C_c^\infty((-1,1))$, as required in
\eqref{eq:harmonic-definition}. It is even, nonnegative, and has integral
one. The cutoff $\theta$ is one for $|\tau|\leq c/3$ and zero for
$|\tau|\geq2c/3$. Compute $M,\mu,\Theta,C_p,B_{\mathrm{sec}}$ from
\eqref{eq:fixed-cometrics} by the fixed-jet budget in
Appendix~\ref{sec:smoothing-bounds}, and choose
\[
 \zeta=\frac12\min\left\{\frac c{12},\frac1{32M},\frac1{C_p},
        \frac{\kappa_*}{2000M^3},
        \frac{\kappa_*}{2B_{\mathrm{sec}}C_p}\right\}.
\]
Formula \eqref{eq:harmonic-definition} now specifies the final collar
metric and leaves the two disk metrics unchanged outside that collar.
Its sectional curvature is greater than $\kappa_*/2$ everywhere.

For this member $k=180$, the previously proved topological formulas give
\[
 \bigl(\langle p_1^4\rangle,\langle p_1^2p_2\rangle,
        \langle p_2^2\rangle,\langle p_1p_3\rangle\bigr)
       =(46080,17280,0,10080),\qquad
 \mu(\Sigma_1)=15/1016\pmod\ZZ.
\]
Thus the smooth type is nonstandard by
Theorem~\ref{thm:independent-topology}. The analytic prescription uses
the exact uniform norms defined above. No numerical values of those
connection or collar norms have been certified here, and no numerical
implementation of the resulting metric is claimed. The recipe fixes
the auxiliary choices after $f_1$ is given; it does not select a
canonical gauge family or a canonical metric on the smooth type.
\section{Completion of the proof}

\begin{proof}[Proof of Theorem~\ref{thm:main}]
Theorem~\ref{thm:independent-topology} supplies the specified smooth
conjugation-equivariant map \(\beta_m\), its exact disk presentation,
and the value of \(\mu(\Sigma_m)\) for each integer \(m\).
Theorem~\ref{thm:diskconstruction}, proved by the metrics and estimates
in this paper, gives a smooth positively curved metric on that
same marked disk gluing. The invariant and orientation argument in
Theorem~\ref{thm:independent-topology} give precisely the two asserted
counts.
\end{proof}

The small parameters depend on the chosen smooth gauge family and
connection. This is sufficient for existence on each fixed sphere.
There is no claim of a lower bound independent of \(m\), a preferred
metric for each smooth type, or a classification of all positively
curved fifteen-spheres. Similarly, the proof does not construct the
additional non-isometric circle realization studied in the
eleven-dimensional paper; that property is not needed here.

The auxiliary estimates in the appendices make each smallness choice
traceable to fixed data. Obtaining a closed formula for a nontrivial
gauge family, and then certified numerical bounds for its connection,
would turn the specification in Section~\ref{sec:choices} into a more
explicit example. This further computation is separate from the
existence argument above.

\subsection*{Use of computational tools}
Generative AI tools assisted with drafting, symbolic calculations, and
checks of intermediate arguments in preparing this version. Exact-arithmetic
checks cover selected algebraic identities and invariant counts; they do
not constitute a formal verification of the geometric and topological
proofs.
\appendix
\section{Quantitative bounds for the connection metric}\label{sec:connection-bounds}

Here is a fully specified version of the small-parameter choice in
Proposition~\ref{prop:independent-connection}.  Write
\[
 n=\dim B,\qquad N=\binom n2,\qquad m=3n,
 \qquad F=\sup_B|\Omega|,\quad
 \Phi=\sup_B|\phi|,\quad P=\sup_B|d\phi|.
\]
The norm of $\Omega$ is the Hilbert--Schmidt norm as a section of
$\Lambda^2T^*B\otimes\mathfrak{sp}(1)$; thus every
$|\Omega(e_i,e_j)|$ is at most $F$.  Keep the operator-norm constants
$C_0,C_1$ in the proposition.  If only Hilbert--Schmidt bounds are
available, the sufficient replacements
\[
 C_0\leq F^2/2,\qquad C_1\leq\tfrac12\sup_B|D\Omega|
\]
follow by Cauchy--Schwarz.  In the first inequality one uses
$\sum_{i,k,u}|\Omega(e_i,e_k)_u|^2=2|\Omega|^2$ in the formula for $S$.

Let $\mathcal E_\varepsilon$ be the difference between the congruent
modified curvature form in \eqref{eq:connection-limit} and its displayed
limit.  Label its three spaces $H=\Lambda^2TB$, $M=TB\otimes
\mathfrak{sp}(1)$ and $V=\Lambda^2\mathfrak{sp}(1)$; their dimensions
are $N,m,3$.  For
\[
 0<\varepsilon\leq1,\qquad \varepsilon\Phi\leq1,
\]
the following are bounds for the \emph{operator norms} of all six blocks:
\[
\begin{array}{c|c|l}
\text{block}&\text{bound}&\text{specified constant}\\ \hline
 HH&a\varepsilon&a=eNF^2\\
 HM&b\varepsilon&b=\sqrt e\bigl(\Phi C_1/2+\sqrt{Nm}\,PF\bigr)\\
 HV&c\sqrt\varepsilon&c=\sqrt{3N}\bigl(3F/2+enF^2/2\bigr)\\
 MM&d\varepsilon&d=e\Phi C_0+P^2/4\\
 MV&f\varepsilon^{3/2}&f=\sqrt{3m}\,\sqrt e\,PF/2\\
 VV&j\varepsilon&j=e\Phi+P^2/4.
\end{array}
\]
These constants require no bound on $R_B$ and no additional derivative
of $\phi$: its Hessian occurs exactly in the limiting mixed block.

To verify the table, use $r^2\leq e\varepsilon$,
$|d\log r|\leq\varepsilon P/2$,
$|e^{\varepsilon\phi}-1|\leq e\varepsilon\Phi$, and
$|e^{\varepsilon\phi/2}-1|\leq\sqrt e\varepsilon\Phi/2$.
The horizontal error has entries at most $r^2F^2$.  The horizontal--mixed
derivative error is
$(e^{\varepsilon\phi/2}-1)L$; the remaining entries have magnitude at
most $\sqrt e\varepsilon PF$.  After adding the four-form, the
horizontal--vertical entries before the $\sqrt\varepsilon$ congruence
are at most $3F/2+r^2nF^2/2$.  The mixed error is exactly
\[
 (e^{\varepsilon\phi}-1)S
       -\tfrac\varepsilon4(d\phi\otimes d\phi)\otimes Q.
\]
The mixed--vertical entries are bounded by
$r|d\log r|F\leq\sqrt e\varepsilon^{3/2}PF/2$.
Finally the vertical error is the scalar
$e^{-\varepsilon\phi}-1-\varepsilon^3|d\phi|^2/4$.
For an $r\times s$ matrix with entries bounded by $b$, its operator
norm is at most $\sqrt{rs}\,b$; this accounts for every dimension
factor in the table.

Put
\[
 \alpha=\min\{\kappa/2,1\},\qquad
 K=\max\{a+b+c,\ b+d+f,\ c+f+j\}.
\]
The table and the maximum-row-sum bound for a symmetric three-by-three
matrix give
$\|\mathcal E_\varepsilon\|_{\rm op}\leq K\sqrt\varepsilon$.
Thus one completely explicit sufficient threshold is
\begin{equation}\label{eq:connection-explicit-threshold}
 \varepsilon_*=
 \min\left\{1,\frac1{\max\{1,\Phi\}},
       \left(\frac{\alpha}{2\max\{1,K\}}\right)^2\right\}.
\end{equation}
For $0<\varepsilon\leq\varepsilon_*$ the proof of the proposition
gives not only positivity but the quantitative lower bound
\begin{equation}\label{eq:connection-explicit-sec}
 \sec_{G_\varepsilon}\geq\tfrac12\alpha\varepsilon.
\end{equation}
Indeed, for a unit simple bivector the weighted square norm in that proof
is $|\xi_H|^2+\varepsilon|\xi_M|^2+
\varepsilon^{-1}|\xi_V|^2\geq\varepsilon$.
The four-form makes no contribution to its sectional numerator.

For a less conservative numerical choice, one may retain the six powers
of $\varepsilon$ in the table and require all row sums of
\[
 \begin{pmatrix}
 a\varepsilon&b\varepsilon&c\sqrt\varepsilon\\
 b\varepsilon&d\varepsilon&f\varepsilon^{3/2}\\
 c\sqrt\varepsilon&f\varepsilon^{3/2}&j\varepsilon
 \end{pmatrix}
\]
to be at most $\alpha/2$.  These are three explicit monotone polynomial
inequalities in $\sqrt\varepsilon$ and can be certified without sampling
points or two-planes.  Formula \eqref{eq:connection-explicit-threshold}
is available when such refinement is unnecessary.
\section{Quantitative bounds for harmonic smoothing}\label{sec:smoothing-bounds}
All constants in this paragraph are computed after the two metrics and
the collar have been fixed. Set $g_0=dt^2+h_0$ and identify a tangential
cometric with an endomorphism using $h_0$. Norms of endomorphisms are
operator norms; norms of derivative tensors are induced multilinear
norms. The norm of a jet is the maximum of its indicated block norms.
On the two closed half-collars $|t|\le3c/4$, put
\[
 M=\max\left\{1,\|R^{h_0}\|_{(1,3)},
   \max_{i+j\le3}\sup
       \| (\nabla^0)^i\partial_t^j p\|\right\}.
\]
\[
 \mu=\min\lambda_{h_0}(p)>0,\quad
 \Theta=\max\{1,\|\theta'\|_\infty,\|\theta''\|_\infty\}.
\]
The derivatives and minimum are taken separately on the two sides,
including their one-sided values at zero. In particular $\mu\le1$,
since $p(0)=I$. Let $r=2M\zeta$ and assume $\zeta<c/12$ and $r\le1/4$.
In the seam strip the following bounds suffice:
\[
\begin{array}{c|c|c}
\text{quantity}&\text{bound}&\text{highest required derivative of }p\\ \hline
 E_0\text{ in }C^2(Y)&r&(\nabla^0)^2\partial_t p\\
 E_1\text{ in }C^1(Y)&r&\nabla^0\partial_t^2p\\
 E_2\text{ in }C^0(Y)&r&\partial_t^3p\\
 \mathcal A_H-\overline{\mathcal A}
       -\tfrac34vD^2-\tfrac12JHDH&60M^2r&\text{total order }3\\
 \mathcal B_H-\overline{\mathcal B}
       -\tfrac14v\Lambda^2D&160M^2r&\text{total order }3\\
 \mathcal C_H-\overline{\mathcal C}&90Mr&\text{total order }3
\end{array}
\]
The tangential row is evaluated on unit simple tangential bivectors;
the mixed row on a unit vector and a unit simple tangential bivector.
The other rows use their stated operator or multilinear norms. Each
of the first three bounds is separate; their sum is at most $3r$.

Here is a direct verification of the budgets. In the seam strip,
\[
 \|p_\zeta-I\|\le r,\quad \|H\|\le4/3,\quad
 \|H-I\|\le4r/3,\quad
 \|\nabla^0 H\|\le4r,\quad
 \|(\nabla^0)^2H\|\le8r.
\]
Write $A=p'_\zeta$, $A_0=-\overline P$, and let
$B=p''_{\zeta,\mathrm{reg}}$, $B_0=2\overline{P^2}-\overline Q$.
Then $\|A\|,\|A_0\|,\|B\|,\|B_0\|\le M$ and
$\|A-A_0\|,\|B-B_0\|\le r$. Telescoping products gives
\[
 \|2HAHAH-2A_0^2\|<24M^2r,\qquad
 \|HBH-B_0\|<6M^2r,
\]
and
\[
 \|H'-\overline P\|\le8Mr,\quad \|H'\|<2M,\quad
 \|H'p_\zeta H'-(\overline P)^2\|\le30M^2r.
\]
Thus the radial regular error is less than $60M^2r$.
The atom $JHDH$ is excluded from these telescoping estimates and is
retained exactly.

For $C=\nabla^H-\nabla^0$, the connection difference formula gives
\[
 \|C\|<8r,\quad \|\nabla^0 C\|<17r,\quad
 \|R^H-R^{h_0}\|_{(1,3)}\le66r.
\]
Lowering an index with $H$ and changing that lowering metric contributes
at most $(132+2M)r$ to the intrinsic $(0,4)$ error. The change in the
Gauss quadratic expression is at most $20M^2r$, so the tangential
bound is at most $160M^2r$. Differentiating $H'=-Hp'_\zeta H$ gives
\[
 \|\nabla^0(H'-\overline P)\|\le50Mr.
\]
The two covariant slots in the Codazzi connection correction cost less
than $32Mr$. Thus its error is less than $90Mr$.
For $|A|^2+|B|^2=1$, the combined regular error is at most
\[
 60M^2r|A|^2+160M^2r|B|^2+180Mr|A||B|
 \le250M^2r=500M^3\zeta.
\]
Consequently the seam estimate can be written explicitly as
\[
 \secg_{g_\zeta}\ge
 \frac{\kappa-500M^3\zeta}{(1+4M\zeta)^2},\qquad |t|\le\zeta.
\]

For the outer strip, define $C_p=4\Theta M$. The product rule and
one-sided Taylor estimates give
\[
 \max_{i+j\le2}\| (\nabla^0)^i\partial_t^j(p_\zeta-p)\|
 \le C_p\zeta,\qquad \zeta\le|t|\le2c/3.
\]
Indeed the three possible normal-derivative coefficients are
$1$, $1+\Theta$, and $1+3\Theta$, all at most $4\Theta$.
No fourth derivative is needed. If $C_p\zeta\le1$, inverse
differentiation and telescoping give the explicit metric-jet bound
\[
 \|H-h\|_{C^2}\le C_I C_p\zeta,\qquad
 C_I=6\mu^{-4}(M+1)^2+6\mu^{-3}(M+1)+\mu^{-2}.
\]
Both cometrics have eigenvalues in $[\mu,M]$. The same interval
contains the zeroth-order part of every segment between their jets.

For a finite, reconstructible curvature constant, define $\mathcal E$
to be the bundle of tangential cometric jets of order two satisfying
\[
 \mu I\le P\le MI,\qquad
 \max_{1\le i+j\le2}\|P_{ij}\|\le M+1.
\]
It lies over the fixed compact half-collars. Include arbitrary formal
derivative tensors with these bounds; imposing jet compatibility is
unnecessary. Use the product connection to convert the derivative
blocks into full jets. For a $g_0$-orthonormal pair $u,v$, let
$\mathcal S_y(j;u,v)$ be the sectional-curvature rational expression
of $dt^2+P^{-1}$, evaluated from the jet $j$. This expression is
reconstructed from
\[
 H_{;a}=-HP_{;a}H,\qquad
 H_{;ab}=HP_{;b}HP_{;a}H+HP_{;a}HP_{;b}H-HP_{;ab}H,
\]
the connection difference formula
\[
 2g(C(X,Y),Z)=(\nabla^0_Xg)(Y,Z)+(\nabla^0_Yg)(X,Z)
                              -(\nabla^0_Zg)(X,Y),
\]
and the $(1,3)$-tensor identity
$R^g=R^{g_0}+(\nabla^0 C)_{\mathrm{alt}}+(C*C)_{\mathrm{alt}}$,
followed by lowering an index and dividing by
$g(u,u)g(v,v)-g(u,v)^2$. Define
\[
 B_{\mathrm{sec}}=
 \max\left\{1,\sup_{y,j\in\mathcal E,\ (u,v)\in\mathrm{St}_2(g_0)}
             \|D_j\mathcal S_y(j;u,v)\|\right\}.
\]
This is a derivative norm of an explicitly specified rational map on
a specified compact set, not a curvature-positivity condition.
All metric eigenvalues are at least $M^{-1}$, so its area denominator
is at least $M^{-2}$ and the constant is finite. The derivative norm
uses the stated maximum jet norm, including its dimension-dependent
conversion to a full tensor jet. It can be bounded by differentiating
the displayed formulas on this compact set. The mean value theorem
along the straight jet segment now gives
\[
 \secg_{g_\zeta}\ge\kappa-B_{\mathrm{sec}}C_p\zeta
 \quad\text{on the outer strip}.
\]
In particular, one explicit sufficient threshold for this proof is
\[
 0<\zeta<\min\left\{
 \frac c{12},\frac1{32M},\frac1{C_p},
 \frac{\kappa}{2000M^3},
 \frac{\kappa}{2B_{\mathrm{sec}}C_p}\right\}.
\]
Every quantity on the right is defined by the fixed one-sided data,
the fixed cutoff, and the already established one-sided lower bound
$\kappa$. None depends on the smoothed metric or on a positivity
claim to be proved about that metric.

\end{document}